\numberwithin{equation}{section}
\def\beq{\begin{eqnarray}}
\def\eeq{\end{eqnarray}}
\def\beqs{\begin{eqnarray*}}
\def\eeqs{\end{eqnarray*}}
\def\NN{{\mathbb N}}
\def\ZZ{{\mathbb Z}}
\def\dfrac{\displaystyle\frac}
\def\dim{{\hbox{\rm dim}}}
\def\dsum{\displaystyle\sum}
\def\a{\alpha}
\def\b{\beta}
\def\ep{\epsilon}
\def\dfrac{\displaystyle\frac}
\newfont{\df}{eufm10}
\def\r{\gamma}
\def\a{\alpha}
\def\CC{{\mathbb C}}
\def\mod{{\hbox{\rm mod}}}
\def\deg{\hbox{\rm deg}}
\def\id{\hbox{\rm id}}
\def\wg{\widetilde{\mathcal G}}
\title[vertex representations for $C_l^{(1)}$]
{Vertex Operator Representations of Type $C_l^{(1)}$ \\ and
Product-Sum Identities}
\thanks{$^\star$Corresponding author's email: xialimeng@ujs.edu.cn}
\thanks{Xia is supported by the  NNSF of China (Grant No. 11001110) and {\it Jiangsu Government Scholarship for Overseas Studies}. Hu is
supported in part by the NNSF of China (Grant No. 11271131), the
PCSIRT and the RFDP from the MOE of China, the National \& Shanghai
Leading Academic Discipline Projects (Project Number: B407).}
\author[L.M. Xia]{Li-meng   Xia$^{1,\star}$}
\author[N.H. Hu]{Naihong   Hu$^{2}$}
\date{}
\begin{document}
\maketitle \centerline{$^1$Faculty of Science, Jiangsu University}
\centerline{Zhenjiang 212013, Jiangsu Province, P.R. China}
\centerline{$^2$Department of Mathematics, East China Normal
University} \centerline{Dongchuan Road 500, Shanghai 200241, P.R.
China}

\def\abstractname{ABSTRACT}
\begin{abstract}

The purposes of this work are to construct a class of  homogeneous
vertex representations of $C_l^{(1)} \ (l\geq2)$, and to derive a
series of product-sum identities. These identities have fine
interpretation in number theory.

{\it Key words}: Vertex operator, representations, product-sum
identities.
\end{abstract}

\newtheorem{theo}{Theorem}[section]
\newtheorem{theorem}[theo]{Theorem}
\newtheorem{defi}[theo]{Definition}
\newtheorem{lemma}[theo]{Lemma}
\newtheorem{coro}[theo]{Corollary}
\newtheorem{proposition}[theo]{Proposition}
\newtheorem{remark}[theo]{Remark}

\setcounter{section}{0}
\section{Introduction}

It is well known that there is a close relationship between
representations of affine Lie algebras and combinatorics. For
example,  the Jacobi triple product identity can be obtained as the Weyl-Kac denominator formula for the affine Lie algebra $\widehat{sl}_2$(\cite{K}). The famous Rogers¨CRamanujan identities can be realized from the character formula of certain level three representations \cite{LW1}. Like the Jacobi triple product identity, the quintuple product identity is also equivalent to the
Weyl-Kac denominator formula for the affine Lie algebra $A^{(2)}_2$.  In \cite{JX}, the following infinite product \beq
\prod_{n=1}^\infty\frac{1}{(1-q^{6n-1})(1-q^{6n-5})} \eeq is
expressed by a sum of two other infinite products in four different
ways.

I. Schur \cite{S} (see also \cite{A}) was probably the first person
who studied the partitions described by (1.1). He showed that the
number of partitions of $n$ into parts congruent to $\pm1(\mod\; 6)$
is equal to the number of partitions of $n$ into distinct parts
congruent to $\pm1(\mod\; 3)$ and is also equal to the number of
partitions of $n$ into parts that differ at least $3$ with added
condition that difference between multiples of $3$ is at least $6$.
His first result can be briefly described by \beq
\prod_{n=1}^\infty\frac{1}{(1-q^{6n-1})(1-q^{6n-5})}&=&\prod_{n=1}^\infty\frac{(1+q^{n})}{(1+q^{3n})}.
\eeq

Motivated by product-sum identity provided by \cite{JX}, we study a generalized product-sum relations of some special partitions. Our method uses the vertex
representations of affine Lie algebras of type $C_l^{(1)}$. For the
related topics, one can refer \cite{FLM}, \cite{G}, \cite{LH},
\cite{Mi}, \cite{XH} and references therein.

\begin{theorem}
For any odd $l\geq3$, the following product-sum identity holds:
\beqs
\prod_{n=1}^\infty\frac{(1+q^{n})}{(1+q^{ln})}&=&\sum_{s=0}^{\frac{l-1}2}q^{\frac{(l-2s)^2-1}{8}}\prod_{n\not\equiv \pm(s+1),0(\hbox{\rm mod }l+2)}\frac{1}{(1-q^{2n})(1-q^{ln})},
\eeqs
particularly, it covers the first result of \cite{JX} when $l=3$.
\end{theorem}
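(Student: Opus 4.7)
The plan is to exhibit both sides of the identity as two presentations of the principally specialized graded character of a single level one homogeneous vertex operator module for $C_l^{(1)}$ constructed in the preceding sections of the paper. Concretely, one would build a Fock space $V$ on which $C_l^{(1)}$ acts at level one by combining Heisenberg bosonic oscillators (whose modes give the bosonic part of the partition function) with twisted fermions whose modes contribute a $\prod_{n\geq 1}(1+q^n)$-type factor. Under the principal specialization in which $q$ tracks the homogeneous degree, the character of $V$ collapses directly onto the left-hand side $\prod_{n\geq 1}(1+q^n)/(1+q^{ln})$; this is the ``easy'' side, obtained just by reading off the Fock space grading.

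For the right-hand side I would decompose $V$ into its irreducible $C_l^{(1)}$-submodules. At level one the integrable highest weights are the fundamental weights $\Lambda_0,\ldots,\Lambda_l$, and the parity and internal symmetries of $V$ (together with the identification $\Lambda_s \leftrightarrow \Lambda_{l-s}$ coming from the diagram involution) restrict the decomposition to $\Lambda_s$ with $s = 0,1,\ldots,(l-1)/2$, each occurring with multiplicity one. The vacuum $q$-shift of the irreducible $L(\Lambda_s)$ at level one is the conformal weight $((l-2s)^2-1)/8$, matching the prefactor in the sum. The remaining factor is the normalized character of $L(\Lambda_s)$, which I would compute via the Weyl--Kac character formula: the numerator, a signed theta sum over the affine Weyl group, should reduce after the Jacobi triple product to a product of $(1-q^{(l+2)n})$-type factors that excise precisely the residue classes $n \equiv \pm(s+1)\,(\mathrm{mod}\,l+2)$, while the denominator, built from the positive roots of $C_l^{(1)}$ under the principal grading (long roots contributing $q^{2n}$ and short roots contributing $q^{ln}$), provides $\prod_{n\geq 1}(1-q^{2n})(1-q^{ln})$. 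The exclusion modulus $l+2 = 1 + (l+1)$ is the level plus dual Coxeter number, which is precisely where the residue class structure originates.

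Equating the two character expressions yields the identity, and the specialization $l=3$ reduces the sum to $s \in \{0,1\}$ and recovers the first identity of \cite{JX} after applying Schur's product rewrite. The main obstacle is the theta function reduction: cleanly extracting the restricted product $\prod_{n\not\equiv \pm(s+1),0\,(\mathrm{mod}\,l+2)}1/((1-q^{2n})(1-q^{ln}))$ from the Weyl--Kac numerator requires careful bookkeeping of the $C_l^{(1)}$ root lattice, precise use of the Jacobi triple product, and correct accounting of the long/short root multiplicities under the principal specialization. A secondary but nontrivial step is verifying that the Fock space decomposition really involves only the $(l+1)/2$ irreducibles indexed by $s = 0,\ldots,(l-1)/2$ and that each appears with multiplicity one; this would be confirmed by matching the $q$-expansions of both sides through the first few orders as a consistency check once the general character identity has been derived.
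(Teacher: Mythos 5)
There is a genuine gap, and it sits exactly where your outline places all the weight: the claim that the Fock module decomposes into the level-one irreducibles $L(\Lambda_s)$, $s=0,\dots,\frac{l-1}{2}$, \emph{each with multiplicity one}, and that the entire factor $\prod_{n\not\equiv \pm(s+1),0\,(\mathrm{mod}\;l+2)}\frac{1}{(1-q^{2n})(1-q^{ln})}$ is the specialized Weyl--Kac character of a single $L(\Lambda_s)$. Neither holds for the module that encodes this identity. In the paper, $V(P)=S(\widehat H^-)\otimes\CC[P]$ is \emph{not} multiplicity-free: it decomposes as $\bigoplus_{s=0}^{l}\Omega_s\otimes L(\Lambda_s-\frac{s}{4}\delta)$ with infinite-dimensional multiplicity spaces $\Omega_s$ (Theorem 6.4 and Section 8). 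The restricted product then splits into two halves of entirely different origin. The Weyl--Kac computation (Lemma 4.1, via Kac's Proposition 10.10) gives only $\dim_q L(\Lambda_s)=\frac{1}{\prod_{n}(1-q^{2n-1})}\prod_{m\not\equiv 0,\pm(s+1)\,(\mathrm{mod}\;l+2)}\frac{1}{1-q^{2m}}$, i.e.\ the $(1-q^{2n})$ half of your product plus a stray factor that must cancel against the left-hand side. The other half, $\prod_{m\not\equiv 0,\pm(s+1)\,(\mathrm{mod}\;l+2)}\frac{1}{1-q^{lm}}$, is precisely $\mathrm{ch}_q\Omega_s$: an Andrews--Gordon/Bressoud-type product arising because $\Omega_s$ is the vacuum space of Lepowsky--Wilson $Z$-operators attached to a level-$l$ $A_1^{(1)}$ structure (whence the modulus $l+2$ equals $l$ plus the dual Coxeter number $2$ of $A_1$). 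It cannot be extracted from the Weyl--Kac numerator of a level-one $C_l^{(1)}$ module, where the same modulus appears for the unrelated reason $1+(l+1)=l+2$. Establishing the spanning set and character of $\Omega_s$ (Lemmas 7.1--7.9, resting on the $Z$-operator relations and the Rogers--Ramanujan-type results of Lepowsky--Wilson and Bressoud) is the technical core of the proof and is entirely absent from your plan.

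Two secondary points. First, the left-hand side is not obtained ``just by reading off the Fock space grading'': the lattice part $\CC[P]$ contributes theta sums $\kappa_q(l,r)=\sum_{n}q^{ln^2-rn}$, and converting $\mathrm{ch}_qV(P)$ into $\prod_{n}(1+q^{n})/(1+q^{ln})$ up to a power of $q$ already requires the Gauss and quintuple product identities (Section 4 and Eqs.\ (8.3)--(8.5)). Second, your proposed low-order coefficient check cannot repair the multiplicity claim, because the discrepancy is structural: the missing factor grows like a partition function, so no finite verification distinguishes ``multiplicity one'' from ``multiplicity space with character $\mathrm{ch}_q\Omega_s$'' without the vacuum-space analysis.
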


Our result in Theorem 1.1 implies the following partition theorem:

\begin{theorem}
Suppose that $l=2r+1\geq3$ is an odd number, $A_l(n)$ is the number
of partitions of $n$ into distinct parts without multiples of $l$,
and $B_{l,s}(n)$ is the number of partitions of $n$ into
$$2k_1+\cdots+2k_i+lr_1+\cdots+lr_j+\frac{(l-2s)^2-1}{8}$$ with constraints
$k_p,r_p\not\equiv\pm(s+1),0 (\mod\;l+2)$. Then for any positive integer $n$, we have
$$A_l(n)=B_{l,0}(n)+B_{l,1}(n)+\cdots+B_{l,r}(n).$$
\end{theorem}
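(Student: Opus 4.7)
The plan is to derive Theorem 1.2 as a direct coefficient-wise reading of the product-sum identity in Theorem 1.1. Both sides of that identity will be reinterpreted as generating functions for the partition statistics $A_l(n)$ and $\sum_{s=0}^r B_{l,s}(n)$, after which equating coefficients of $q^n$ finishes the proof.

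First I would identify the left-hand side of Theorem 1.1 as the generating function for $A_l(n)$. Since $\prod_{n\geq 1}(1+q^n)$ is the classical generating function for partitions into distinct parts and $\prod_{n\geq 1}(1+q^{ln})$ enumerates partitions into distinct multiples of $l$, their ratio factors out the latter constraint, giving
$$\prod_{n=1}^\infty \frac{1+q^n}{1+q^{ln}} \;=\; \prod_{\substack{n\geq 1\\ l\nmid n}}(1+q^n) \;=\; \sum_{n\geq 0} A_l(n)\, q^n.$$
Next, for each fixed $s$ with $0\leq s\leq r$, I would read the factor
$$\prod_{n\not\equiv \pm(s+1),\,0\ (\bmod\ l+2)} \frac{1}{(1-q^{2n})(1-q^{ln})}$$
as the generating function for unordered collections of parts of the shapes $2k_p$ and $lr_p$ subject to $k_p, r_p\not\equiv \pm(s+1), 0\pmod{l+2}$. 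Multiplying by the monomial $q^{((l-2s)^2-1)/8}$ then adds the constant offset prescribed in the definition of $B_{l,s}$, so the $s$-th summand on the right of Theorem 1.1 equals $\sum_{n\geq 0} B_{l,s}(n)\,q^n$.

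Summing over $s$ and invoking Theorem 1.1 yields
$$\sum_{n\geq 0} A_l(n)\, q^n \;=\; \sum_{n\geq 0} \Bigl(\sum_{s=0}^{r} B_{l,s}(n)\Bigr) q^n,$$
whence comparison of coefficients establishes Theorem 1.2. There is no substantive obstacle here: the only bookkeeping check is that the offset $((l-2s)^2-1)/8$ is a nonnegative integer in the range of summation. This is immediate, since $l-2s$ is odd (so its square minus one is divisible by $8$) and satisfies $l-2s\geq 1$ for $0\leq s\leq r=(l-1)/2$. Thus Theorem 1.2 is simply the combinatorial translation of Theorem 1.1, and the real work of the paper lies in the vertex-operator proof of Theorem 1.1 itself.
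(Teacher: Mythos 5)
Your proposal is correct and follows essentially the same route as the paper: both interpret the left-hand side of Theorem 1.1 as $\prod_{l\nmid n}(1+q^n)=\sum A_l(n)q^n$, read each summand on the right as the generating function for $B_{l,s}(n)$, and equate coefficients. Your explicit check that $\frac{(l-2s)^2-1}{8}$ is a nonnegative integer is a small bookkeeping addition the paper leaves implicit, but otherwise the arguments coincide.
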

\begin{proof}
Let $1+\sum_{n=1}^\infty A_na^n$ be the power series of $\prod_{n=1}^\infty\frac{(1+q^{n})}{(1+q^{ln})}$.  Because
\beqs \prod_{n=1}^\infty\frac{(1+q^{n})}{(1+q^{ln})}&=&\prod_{\hbox{\rm $n\geq 1$  is not a multiple of $l$}}(1+q^{n})\\
&=&\sum_{\hbox{\rm $n_1>n_2>\cdots>n_k\geq 1$}\atop\hbox{\rm $n_i$ is not a multiple of $l$,  $k\geq 0$}}q^{n_1+\cdots+n_k},\eeqs
Then $A_n$ is the number  of partitions of $n$ into distinct parts without multiples of $l$ and $A_n=A(n)$.

A similar argument on $B_{l,s}(n)$ shows that Theorem 1.1 is equivalent to   the relation
$$A_l(n)=B_{l,0}(n)+B_{l,1}(n)+\cdots+B_{l,r}(n),\quad  \hbox{\rm for all positive integer }n.$$
\end{proof}
For example,
$$\begin{array}{lllll}
A_5(15)=16&&B_{5,0}(15)=3&B_{5,1}(15)=7&B_{5,2}(15)=6\\
1+14&1+6+8&2(2\times3)+3&2(1\times7)+1&2(1\times5)+5(1)\\
2+13&2+4+9&2(2+4)+3&2(1\times4+3)+1&2(1\times3+2)+5(1)\\
3+12&2+6+7&2(3+3)+3&2(1+3\times 2)+1&2(1+2\times2)+5(1)\\
4+11&3+4+8&&2(1\times3+4)+1&2(5)+5(1)\\
6+9&1+2+3+9&&2(1+6)+1&5(1\times3)\\
7+8&1+2+4+8&&2(3+4)+1&5(1+2)\\
1+2+12&1+3+4+7&&2(1\times2)+5(1\times2)\\
1+3+11&2+3+4+6\\
\end{array}
$$
Table 1 lists the values of $A_5(n),B_{5,0}(n),B_{5,1}(n),B_{5,2}(n)$ for $n\leq15$.
$$\begin{array}{c|cccc}
 \hline n&A_5(n)&B_{5,0}(n)&B_{5,1}(n)&B_{5,2}(n)\\
\hline1&1&0&1&0\\
2&1&0&0&1\\
3&2&1&1&0\\
4&2&0&0&2\\
5&2&0&1&1\\
6&3&0&1&2\\
7&4&1&2&1\\
8&4&0&1&3\\
9&6&1&3&2\\
10&7&0&1&6\\
11&8&2&4&2\\
12&10&0&2&8\\
13&12&3&6&3\\
14&14&0&3&11\\
15&16&3&7&6\\
\hline
\end{array}$$
\centerline{\bf Table 1}

The above results will be proved by the irreducible decompositions of vertex module $V(P)=S(\widehat{H}^-)\otimes \CC[P]$ of $C_l^{(1)}$, where $1\otimes1$ has weight $\Lambda_0$. If we assume that $1\otimes 1$ has weight $\Lambda_1$, then our method also gives the following result:
\begin{theorem}
For any even $l\geq 2$, the following product-sum identity holds:
\beqs
\prod_{n=1}^\infty\frac{(1+q^{n-\frac12})^2}{(1+q^{n})(1+q^{ln})}&=&\frac{\prod_{n\geq1}(1-q^{(l(\frac{l+2}{2})(2n-1)})(1-q^{(l+2)(2n-1)})}{\prod_{(\frac{l}2+1)\not\;| n}{(1-q^{2n})(1-q^{ln})}}\\
&&+2\sum_{s=0}^{\frac{l}2-1}q^{\frac{(l-2s)^2}{8}}\prod_{n\not\equiv \pm(s+1),0(\hbox{\rm mod }l+2)}\frac{1}{(1-q^{2n})(1-q^{ln})},
\eeqs
or equivalently,
\beqs
\prod_{n=1}^\infty\frac{(1+q^{2n-1})^2}{(1+q^{2n})(1+q^{2ln})}&=&\frac{\prod_{n\geq1}(1-q^{(l(l+2)(2n-1)})(1-q^{2(l+2)(2n-1)})}{\prod_{ (\frac{l}2+1)\not\;| n}{(1-q^{4n})(1-q^{2ln})}}\\
&&+2\sum_{s=0}^{\frac{l}2-1}q^{\frac{(l-2s)^2}{4}}\prod_{n\not\equiv \pm(s+1),0(\hbox{\rm mod }l+2)}\frac{1}{(1-q^{4n})(1-q^{2ln})}.
\eeqs
\end{theorem}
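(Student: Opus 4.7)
The plan is to apply the same vertex-operator machinery that underlies Theorem 1.1, but with the vacuum vector $1\otimes 1$ of the module $V(P)=S(\widehat{H}^-)\otimes\CC[P]$ reassigned weight $\Lambda_1$ in place of $\Lambda_0$. The reassignment shifts the natural $\ZZ$-grading on each $P$-coset by a half-integer, and this is precisely what produces the exponents $n-\tfrac12$ that appear on the left-hand side of the identity.

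From the bosonic description, the graded dimension of $V(P)$ splits into a Fock factor $\prod_{n\geq1}(1-q^n)^{-l}$ and a theta-type sum over $P$ carrying the new half-integer shift. After cancelling the universal denominator factors that will reappear on the other side via the Weyl--Kac formula, the net principal character computes to exactly
$$\prod_{n=1}^\infty\frac{(1+q^{n-\frac12})^2}{(1+q^{n})(1+q^{ln})},$$
which is the left-hand side. The second, equivalent form of the identity then follows by the routine substitution $q\mapsto q^2$.

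Next I would decompose $V(P)=\bigoplus m_i\,L(\Lambda_i)$ into level-one irreducible $C_l^{(1)}$-modules. At level one the irreducibles are parametrized by the cosets of $Q$ in $P$, and only the cosets reachable from $\Lambda_1$ contribute to this decomposition. Pairing cosets under the natural duality $s\leftrightarrow l-s$ is what produces the factor $2$ in front of the sum $\sum_{s=0}^{l/2-1}$, while the self-dual coset corresponding to $s=l/2$ is the exceptional one that yields the first summand of the right-hand side (the term lacking the $q^{(l-2s)^2/8}$ prefactor). Applying the Macdonald/Weyl--Kac specialization to each $L(\Lambda_s)$ then converts the character of each component into a product of the stated form, and summing with multiplicities gives the right-hand side.

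The main technical obstacle will be the bookkeeping in this last step: one must verify the precise parametrization of the $P/Q$-cosets accessible from $\Lambda_1$, and check that the unique self-dual coset produces exactly the exceptional factor
$$\frac{\prod_{n\geq1}(1-q^{l((l+2)/2)(2n-1)})(1-q^{(l+2)(2n-1)})}{\prod_{(l/2+1)\nmid n}(1-q^{2n})(1-q^{ln})}$$
recorded in the statement, with no leftover discrepancy between the half-integer theta-sum on the bosonic side and the Weyl--Kac product on the Lie-algebra side. Once this matching is pinned down, the remaining manipulations reduce to the same Macdonald-type identities already used for Theorem 1.1.
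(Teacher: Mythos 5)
Your overall architecture---computing the specialized character of $V(P)$ once from the Fock-space description and once from its decomposition into level-one irreducibles, with the $s\leftrightarrow l-s$ symmetry producing the factor $2$ and the unpaired middle weight giving the exceptional first summand---is indeed the paper's strategy (Section 8), and your remarks about the bosonic computation of the left-hand side and the substitution $q\mapsto q^2$ are fine. But there is a genuine gap at the center of the argument. You write the decomposition as $V(P)=\bigoplus m_i\,L(\Lambda_i)$ with the multiplicities settled by ``coset bookkeeping,'' and you expect the right-hand products to fall out of the Weyl--Kac/Macdonald specialization of the $L(\Lambda_s)$ alone. Neither works. First, the pattern you are importing is the simply-laced one: for $C_l^{(1)}$ all $l+1$ fundamental weights $\Lambda_0,\dots,\Lambda_l$ have level one (every dual mark is $1$), whereas $[P:Q]=2$ for the lattice $P=Q+\tfrac12\ZZ\alpha_l$, so level-one irreducibles are not parametrized by $P/Q$, and in fact all of $L(\Lambda_0),\dots,L(\Lambda_l)$ occur (Lemma 5.2, Theorem 5.4). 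Second, and more seriously, each $L(\Lambda_s-\tfrac s4\delta)$ occurs with an infinite-dimensional graded multiplicity space $\Omega_s$; after a Jacobi-triple-product rewriting, $\mathrm{ch}_q\Omega_s=\prod_{n\not\equiv 0,\pm(s+1)\,(\mathrm{mod}\ l+2)}(1-q^{ln})^{-1}$ supplies exactly the $(1-q^{ln})$-part of each summand on the right-hand side, while the principal specialization $\dim_q L(\Lambda_s)$ of Lemma 4.1 supplies only the companion $(1-q^{2n})$-part (times the universal $\prod(1+q^n)$). No specialization of the $C_l^{(1)}$-characters can produce the $(1-q^{ln})$ factors, nor the collapsed $s=\tfrac l2$ term with its $\prod_{(\frac l2+1)\nmid n}$ denominator.

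Computing $\mathrm{ch}_q\Omega_s$ is the actual technical content of the paper (Sections 5--7): one shows that the highest weight vectors of $V(P)^{[s]}$ are generated from $1\otimes e^{\gamma_s}$ by the Lepowsky--Wilson operators $Z^{[s]}_n$ built from the half-integer-moded Heisenberg components $H_S$, identifies $\Omega_s$ with the vacuum space of a level-$l$ standard $A_1^{(1)}$-module, and imports the generalized Rogers--Ramanujan product formulas of Lepowsky--Wilson and Bressoud (Lemmas 7.8--7.9, Theorem 6.4). Your proposal contains no substitute for this step, so as written it cannot reach the stated identity; the ``bookkeeping'' you defer to the end is not bookkeeping but the theorem's main input.
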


Throughout the paper, we let $\CC, \ZZ$ present the set of complex numbers and the set of integers, respectively.

\section{Affine Lie algebra of type $C_l^{(1)}$}
\subsection{} Let $\dot{\mathcal G}$ be a
finite-dimensional simple Lie algebra of type $C_l$,
$A=\CC[t^{\pm1}]$ the ring of Laurent polynomials in variable $t$.
Then the affine Lie algebra of
type $C_l^{(1)}$ is the vector space
$$\widetilde{\mathcal G}=\dot{\mathcal G}\otimes A\; \oplus \;\CC c\oplus\CC d,$$
with Lie bracket: \beq{[\,x\otimes t^m, y\otimes t^n\,]}&=&[\,x,
y\,]\otimes t^{m+n}+m(x\,|\,y)\delta_{m+n,0}c,\\
{[\,c, {\mathcal G}\,]}&=&0,\\
{[\,d, x\otimes t^m\,]}&=&mx\otimes t^{m},\eeq where $x, \,y\in \dot{\mathcal G}$, $m,n\in\ZZ$ and $(\cdot\,|\,\cdot)$ is a nondegenerate
invariant normalized  symmetric bilinear form on $\dot{\mathcal
G}$.

\subsection{} Suppose that $\dot H$ is a
Cartan subalgebra of $\dot{\mathcal G}$, and $\dot H^*$ the dual
space of $\dot H$. Then there exists an inner product
$(\cdot\,|\,\cdot )|_{\dot H^*_{\mathbf R}}$ and an orthogonal normal
basis $\{e_1, e_2, \cdots, e_l\}$ in Euclidian space $\dot
H^*_{\mathbf R}$ such that the simple root system
$$\Pi=\left\{\,\alpha_1=\frac{1}{\sqrt{2}}(e_1-e_2),\, \cdots, \,\alpha_{l-1}=\frac{1}{\sqrt{2}}(e_{l-1}-e_l),
\,\alpha_l=\sqrt{2}\, e_l\, \right\},$$ the short root system
$$\dot\Delta_S =\left.\left\{\, \pm\frac{1}{\sqrt{2}}(e_i-e_j), \;\pm\frac{1}{\sqrt{2}}(e_i+e_j)\,\right| \,
1\leq i<j\leq l\,\right\},$$ where
$\frac{1}{\sqrt{2}}(e_i-e_j)=\alpha_i+\cdots+\alpha_{j-1}$ for
$1\le i<j\le l$,
$\frac{1}{\sqrt{2}}(e_i+e_l)=\alpha_i+\cdots+\alpha_l$ for $1\le
i<l$,
$\frac{1}{\sqrt{2}}(e_i+e_j)=\alpha_i+\cdots+\alpha_{j-1}+2\alpha_j+\cdots+2\alpha_{l-1}+\alpha_l$
for $1\le i<j<l$;

\noindent
and the long root system
$$\dot\Delta_L =\left.\left\{\,\pm\sqrt{2}\,e_i\,\right| \, 1\leq i\leq l\,\right\},$$
where $\sqrt{2}\,e_i=2\alpha_i+\cdots+2\alpha_{l-1}+\alpha_l$
for $1\le i<l$.

Then the root lattice is
$$Q=\bigoplus_{i=1}^l \ZZ\,\alpha_i$$
and $(\alpha_i | \alpha_i)=1$ ($1\leq i\leq l-1$), and $(\alpha_l
| \alpha_l)=2$.

Let $\gamma: {\dot H} \longrightarrow {\dot H}^*$ be the linear
isomorphism such that
$$\alpha_i(\gamma^{-1}(\alpha_j))=(\alpha_i | \alpha_j), \quad i, \,j=1,\cdots, l,$$
and
$$\gamma(\alpha_i^\vee)=2\alpha_i \ (i=1, \cdots, l-1), \quad \gamma(\alpha_l^\vee)=\alpha_l.$$
Then we have  $(\alpha_i^\vee |
\alpha_j^\vee)=(\gamma(\alpha_i^\vee)|\gamma(\alpha_j^\vee))$. As
usual, we identify $\dot H$ with $\dot H^*$ via $\r$,
i.e., $\a^{\vee}={2\a\over {(\a\,|\, \a)}}$.

For any weight $\Lambda\in (\dot H\oplus\CC c\oplus\CC d)^*$, let $L(\Lambda)$ denote the irreducible highest weight $\widetilde{\mathcal G}$-module with highest weight $\Lambda$.

\subsection{}
Define a $2$-cocycle $\epsilon_0 : Q\times Q
\longrightarrow\{\pm1\}$ by
$$\epsilon_0(a+b, c)=\epsilon_0(a, c)\,\epsilon_0(b, c), \quad
\epsilon_0(a, b+c)=\epsilon_0(a, b)\,\epsilon_0(a, c),\qquad a,
\,b, \,c\in L,$$ and
$$\epsilon_0(\alpha_i,
\alpha_j)=\left\{\begin{array}{rl}
-1,&i=j+1,\\
1,&\hbox{\it other pairs}~~(i,j),\\
\end{array}\right.$$

Let $P=\bigoplus_{i=1-1}^l\ZZ\,\alpha_i\oplus \frac12\ZZ\,\alpha_l$.
Extend $\epsilon_0$ to $Q\times P$ with
$$\epsilon_0(\alpha_i,\frac{1}{2}\alpha_l)=1.$$

\subsection{} For
$\alpha=\sum_{i=1}^{l}k_i\alpha_i\in\dot\Delta\cup\{0\}$, define
maps $p:\dot\Delta\cup\{0\}\rightarrow \dot\Delta_S\cup\{0\}$ and
$s:\dot\Delta\cup\{0\}\rightarrow \dot Q_L$ by:
$$p\left(\sum_{i=1}^lk_i\alpha_i\right)
=\sum_{i=1}^{l-1}\rho(k_i)\alpha_i,\qquad
s\left(\sum_{i=1}^lk_i\alpha_i\right)=\sum_{i=1}^{l-1}\bigl(k_i-\rho(k_i)\bigr)\alpha_i,$$
where $\dot Q_L=\hbox{Span}_{\ZZ}\,\dot\Delta_L$ and $\rho(k_i)\in
\{0,\,1\}$ such that $\rho(k_i)\equiv k_i~ \mod\; 2$. It is
straightforward to check the following statements.

\begin{lemma} $(\text{\rm i})$  $p\,(\dot\Delta_L\cup\{\,0\,\})=0$, and
$p(-\a)=p(\a)$ for any $\a\in\dot\Delta_S$.

$(\text{\rm ii})$ Suppose that $\a$, $\b$, $\a+\b\in \dot\Delta$,
then we have:

$(1)$ if $\a\in \dot\Delta_L$, then $(\a\mid \b)=-1$,
$p(\a+\b)=p(\b)$, $s(\a+\b)=s(\a)+s(\b)$;

$(2)$ if $\a$, $\b\in\dot\Delta_S$, $\a+\b\in\dot\Delta_L$, then
$(\a\mid \b)=0$, $p(\a)=p(\b)$, $s(\a+\b)-s(\a)-s(\b)=2p(\a)$;

$(3)$ if $\a$, $\b$, $\a+\b\in\dot\Delta_S$, then $(\a\mid
\b)=-\frac{1}{2}$ and $|\,(p(\a)\mid p(\b))\,|=\frac{1}{2}$;
Moreover,

\quad $(a)$  $(p(\a)\mid p(\b))=\frac{1}{2}$, then
$p(\a+\b)=p(\a)-p(\b)$, $s(\a+\b)-s(\a)-s(\b)=2p(\b)$, or
$p(\a+\b)=-p(\a)+p(\b)$, $s(\a+\b)-s(\a)-s(\b)=2p(\a)$;

\quad $(b)$  if $(p(\a)\mid p(\b))=-\frac{1}{2}$, then
$p(\a+\b)=p(\a)+p(\b)$, $s(\a+\b)=s(\a)+s(\b)$.

 $(\text{\rm iii})$  For any $\a\in\dot\Delta$, we have:

$(1)$ \ $s(\alpha)\in\{\pm{\sqrt{2}}(e_i-e_l) \mid 1\le i\le
l\}\subset\dot Q_L$; \

$(2)$ \ $p(\alpha)\in\{\frac{1}{\sqrt{2}}(e_i-e_j)\mid 1\leq i\leq
j\leq l\}\subset \dot\Delta_S\cup\{0\}$; \

$(3)$ \ $s(\a)+s(-\a)=-2p(\a)\in\dot Q_L$; \

$(4)$ \ $\a\pm p(\a)\in\dot Q_L$.
\end{lemma}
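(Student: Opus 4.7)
The plan is to verify every clause by direct inspection of the simple-root expansions of the $C_l$-roots listed just before the lemma, together with the elementary identity $\rho(-k)=\rho(k)$, which already shows $p(-\alpha)=p(\alpha)$ for every $\alpha$.

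Part~(i) is immediate: the long-root expansion $\sqrt{2}\,e_i = 2(\alpha_i+\cdots+\alpha_{l-1})+\alpha_l$ (and $\sqrt{2}\,e_l=\alpha_l$) has even coefficients in $\alpha_1,\dots,\alpha_{l-1}$, so its $p$-image vanishes. Part~(iii) is read off the root lists: for~(1)--(2) one enumerates $p(\alpha)$ and $s(\alpha)$ on the four families of short roots and on the long roots; for~(3), $s(\alpha)+s(-\alpha) = \sum_{i<l}\bigl((k_i-\rho(k_i))+(-k_i-\rho(k_i))\bigr)\alpha_i = -2p(\alpha)$; and for~(4), each $k_i\pm\rho(k_i)$ is even, so $\alpha\pm p(\alpha)\in 2\bigoplus_{i<l}\ZZ\alpha_i\oplus\ZZ\alpha_l = \dot Q_L$.

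Part~(ii) is the main case analysis. In~(1), integrality of $(\alpha^\vee\mid\beta)=(\alpha\mid\beta)$ together with $|\alpha+\beta|^2\in\{1,2\}$ and $|\alpha|^2=2$ forces $(\alpha\mid\beta)=-1$; since the long root $\alpha$ contributes only even coefficients in $\alpha_1,\dots,\alpha_{l-1}$, one has $\rho(k_i^{\alpha+\beta})=\rho(k_i^\beta)$, from which $p(\alpha+\beta)=p(\beta)$ and $s(\alpha+\beta)=s(\alpha)+s(\beta)$ follow from the definitions. In~(2), $|\alpha|^2=|\beta|^2=1$ with $|\alpha+\beta|^2=2$ gives $(\alpha\mid\beta)=0$; the only pairs of short roots summing to a long root are, up to sign, of the form $\tfrac{1}{\sqrt{2}}(e_i+e_j)+\tfrac{1}{\sqrt{2}}(e_i-e_j)=\sqrt{2}\,e_i$, for which one reads off $p(\alpha)=p(\beta)$ and $s(\alpha+\beta)-s(\alpha)-s(\beta)=2p(\alpha)$. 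In~(3), $(\alpha\mid\beta)=-\tfrac12$ follows from the identity $|\alpha+\beta|^2=|\alpha|^2=|\beta|^2=1$; using~(iii)(2) write $p(\alpha)=\tfrac{1}{\sqrt{2}}(e_a-e_b)$ and $p(\beta)=\tfrac{1}{\sqrt{2}}(e_c-e_d)$ with $a<b$, $c<d$, so $(p(\alpha)\mid p(\beta))\in\{0,\pm\tfrac12,\pm1\}$; enumerating the short-short sum pairs rules out the values $0$ and $\pm1$ and matches each remaining case with one of~(a) or~(b) by direct substitution.

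The principal obstacle is the bookkeeping in~(ii)(2) and~(ii)(3): one must track several sign patterns and the relative position of each index with respect to $l$, since $\tfrac{1}{\sqrt{2}}(e_i+e_l)$ and $\tfrac{1}{\sqrt{2}}(e_i+e_j)$ with $j<l$ have different simple-root expansions. After using $p(-\alpha)=p(\alpha)$ (and~(iii)(3)) to reduce to positive-root representatives, each remaining subcase collapses to a short arithmetic check in the $e_i$-basis.
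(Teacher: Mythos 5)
Your plan of direct verification in the $e_i$-basis is exactly the paper's approach: the paper gives no proof at all, stating only that the lemma is ``straightforward to check,'' and your case analysis (integrality of $(\beta\mid\alpha^\vee)$ forcing $(\alpha\mid\beta)=-1$ in (ii)(1), length computations for the remaining inner products, and enumeration of the summing pairs of roots) is the intended and correct one. One small warning if you actually carry out the enumeration for (iii)(1): you will find $s\bigl(-\tfrac{1}{\sqrt{2}}(e_i-e_j)\bigr)=-\sqrt{2}\,(e_i-e_j)$ for $j<l$, which lies in $\dot Q_L$ but not in the printed set $\{\pm\sqrt{2}\,(e_i-e_l)\}$, so that clause of the lemma needs a slight correction of its statement rather than a different argument.
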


\subsection{} Define a map $f:\; Q\times
Q\rightarrow\{\pm1\}$ by
$$f(\alpha, \beta)
=(-1)^{(s(\alpha)\mid\b)+(p(\a)\mid p(\b)+p(\a+\b))}.
$$
Set $\epsilon=\epsilon_0\circ f$, then $\ep: Q\times
Q\longrightarrow\{\pm1\}$ is still a $2$-cocycle, which has the
property (ii) in the following Lemma.

\begin{lemma} $(\text{\rm i})$  For $\alpha, \beta\in\dot\Delta$, we have
$$
\epsilon_0(\alpha, \beta)=(-1)^{(\alpha | \beta)+(p(\alpha) |
p(\beta))+(s(\alpha) | \beta)+(s(\beta) | \alpha)}
\,\cdot\,\epsilon_0(\beta, \alpha).
$$
$(\text{\rm ii})$  For $\a,\;\b,\;\a+\b\in\dot\Delta$, we have
$\epsilon(\alpha, \beta)=-\epsilon(\beta, \alpha).$
\end{lemma}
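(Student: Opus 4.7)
The plan is to prove (i) by combining the bimultiplicativity of $\epsilon_0$ with a case analysis over root types, and then to deduce (ii) from (i) by a direct ratio computation. For part (i), I would first observe that $\epsilon_0$ is bimultiplicative by construction, so $\epsilon_0(\alpha,\beta)/\epsilon_0(\beta,\alpha)$ is bimultiplicative in $(\alpha,\beta)\in Q\times Q$; writing $\alpha=\sum_i k_i\alpha_i$ and $\beta=\sum_j m_j\alpha_j$, the simple-root values force
\[
\frac{\epsilon_0(\alpha,\beta)}{\epsilon_0(\beta,\alpha)}=(-1)^{\sum_{|i-j|=1}k_im_j}.
\]
Since $p$ and $s$ are not $\ZZ$-linear, the exponent on the right of (i) is not bimultiplicative, so one cannot extend a simple-root identity by bilinearity. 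Instead, I would verify (i) on pairs of simple roots as the base case (where $s=0$ and $p$ is the identity on $\alpha_i$ for $i<l$ and zero on $\alpha_l$, making the check immediate) and then run a finite case analysis on $\alpha,\beta\in\dot\Delta$ using the explicit simple-root expansions of Section~2.2 together with the short/long classification.

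For part (ii), I would compute
\[
\frac{\epsilon(\alpha,\beta)}{\epsilon(\beta,\alpha)}=\frac{\epsilon_0(\alpha,\beta)}{\epsilon_0(\beta,\alpha)}\cdot\frac{f(\alpha,\beta)}{f(\beta,\alpha)},
\]
substitute (i) and the definition of $f$, and use symmetry of the inner product to collapse the combined exponent modulo $2$. The $(s(\alpha)|\beta)$ pieces double up and drop out, because $s(\alpha)\in\dot Q_L$ always pairs integrally with a root, leaving
\[
(\alpha|\beta)+(p(\alpha)|p(\beta))+(p(\alpha)-p(\beta)|p(\alpha+\beta))\equiv 1\pmod 2
\]
as the condition to verify. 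The three cases of Lemma~2.1(ii) --- $\alpha$ long (so $p(\alpha)=0$, $p(\alpha+\beta)=p(\beta)$, $(\alpha|\beta)=-1$); $\alpha,\beta$ short with $\alpha+\beta$ long (so $p(\alpha)=p(\beta)$, $p(\alpha+\beta)=0$, $(\alpha|\beta)=0$); and all three short with $(p(\alpha)|p(\beta))=\pm\tfrac12$ --- then each reduce the above condition to a one-line calculation.

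The main obstacle is the bookkeeping in the all-short case of Lemma~2.1(ii), which splits further along the sign of $(p(\alpha)|p(\beta))$ and, in the positive-sign branch, along which of $\pm(p(\alpha)-p(\beta))$ equals $p(\alpha+\beta)$; in each branch one must check that the half-integer inner products between short roots combine into an odd integer. A secondary difficulty lies in (i): because $p$ and $s$ do not extend linearly, the passage from the simple-root identity to the general-root identity demands a per-type case check rather than a single bilinearity argument.
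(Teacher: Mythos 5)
The paper offers no proof of this lemma at all---it is stated bare in Section 2.5---so there is no argument of the authors' to compare yours against; I can only assess your proposal on its own terms. For part (i) your skeleton is sound: the identity $\epsilon_0(\alpha,\beta)/\epsilon_0(\beta,\alpha)=(-1)^{\sum_{|i-j|=1}k_im_j}$ is correct, your warning that the right-hand exponent of (i) is not bilinear in $(\alpha,\beta)$ (so a simple-root check cannot be propagated) is exactly the right caution, and the remaining per-type case check, though not executed, is the right shape; spot checks such as $\alpha=\sqrt{2}\,e_1$, $\beta=\alpha_1$ confirm (i).

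The genuine gap is in part (ii), and it sits in precisely the case you wave off as ``a one-line calculation,'' namely $\alpha\in\dot\Delta_L$. Your parity condition $(\alpha|\beta)+(p(\alpha)|p(\beta))+(p(\alpha)-p(\beta)|p(\alpha+\beta))\equiv 1\pmod 2$ is (up to even integers) the correct reduction, but substituting Lemma 2.1(ii)(1)---$(\alpha|\beta)=-1$, $p(\alpha)=0$, $p(\alpha+\beta)=p(\beta)$, $(p(\beta)|p(\beta))=1$---gives $-1+0-1=-2\equiv 0$, not $1$. So your own method produces $\epsilon(\alpha,\beta)=+\epsilon(\beta,\alpha)$ whenever one of the two roots is long, contradicting the statement you set out to prove. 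This is not an artifact of your reduction: for $l=2$, $\alpha=\alpha_2$, $\beta=\alpha_1$, one reads off from Sections 2.3--2.5 that $\epsilon_0(\alpha_1,\alpha_2)=1$, $\epsilon_0(\alpha_2,\alpha_1)=-1$, $f(\alpha_2,\alpha_1)=1$ (since $s(\alpha_2)=p(\alpha_2)=0$), and $f(\alpha_1,\alpha_2)=(-1)^{(\alpha_1\,|\,p(\alpha_1+\alpha_2))}=(-1)^{(\alpha_1|\alpha_1)}=-1$, whence $\epsilon(\alpha_1,\alpha_2)=\epsilon(\alpha_2,\alpha_1)=-1$. So either a compensating sign has been lost (I cannot find one in your computation), or part (ii) is simply incompatible with the definitions of $\epsilon_0$ and $f$ as literally printed, and a correct proof must begin by identifying and repairing that discrepancy rather than asserting that all three cases of Lemma 2.1(ii) go through. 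A secondary, smaller point: when you ``collapse the combined exponent modulo $2$'' you silently discard terms of the form $2(p(\cdot)|p(\cdot))$, which are integers but can be \emph{odd} because these pairings are half-integral; in your final condition the two such discrepancies happen to cancel (their sum $2(p(\beta)\,|\,p(\alpha)+p(\alpha+\beta))$ is even in every case of Lemma 2.1(ii)), but that needs to be checked and said explicitly.
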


\subsection{} We
have
\begin{proposition} The affine Lie algebra $\wg$ of type $C_l^{(1)}$ has a
system of generators $$\{\alpha_i^\vee\otimes t^{n}, e_\alpha\otimes
t^{n}\mid 1\le i\le l, n\in\ZZ\}$$ and $c, d$ with relations
$$\begin{array}{rcl}
{\bigl[\,\alpha_i^\vee\otimes t^{m},\; \alpha_j^\vee\otimes
t^{n}\,\bigr]}
&=&m(\alpha_i^\vee\,|\,\alpha_j^\vee)\delta_{m+n,0}c,\\
{\bigl[\,\alpha_i^\vee\otimes t^{m},\; e_\alpha\otimes
t^{n}\,\bigr]}
&=&\alpha(\alpha_i^\vee)\,e_\alpha \otimes t^{m+n},\\
{\bigl[\,e_\alpha\otimes t^{m},\; e_{-\alpha}\otimes
t^{n}\,\bigr]} &=&\epsilon(\alpha,
-\alpha)\,\dfrac{2}{(\alpha\,|\, \alpha)}\,
{\bigl[\,\gamma^{-1}(\alpha)\otimes t^{m+n}+m\delta_{m+n,0}c\bigr]},\\
{\bigl[\,e_\alpha\otimes t^{m},\; e_{\beta}\otimes
t^{n}\,\bigr]}
&=&\epsilon(\alpha,\beta)\,\bigl(\,1+\delta_{1,\,(p(\a)\,|\,p(\b))}\bigr)\,e_{\a+\b}\otimes
t^{m+n}, \quad \forall \;
\alpha, \;\beta, \;\alpha+\beta\in\dot\Delta,\\
{\bigl[\,e_\alpha\otimes t^{m},\; e_{\beta}\otimes
t^{n}\,\bigr]}&=&0,\quad \forall \;\alpha,\;\beta\in\dot\Delta,
 \;\alpha+\beta\not\in\dot\Delta\cup\{\,0\,\},
 \end{array}$$
where $\gamma$ is the canonical linear space isomorphism from
$\dot H$ to $\dot H^*$.
\end{proposition}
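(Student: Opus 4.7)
The plan is to reduce every listed bracket to a calculation inside $\dot{\mathcal{G}}$ by invoking the defining relations $(2.1)$--$(2.3)$ of the affine extension. Since $\{\alpha_i^\vee\}_{i=1}^l$ spans $\dot H$ and $\{e_\alpha\}_{\alpha\in\dot\Delta}$ exhausts the root vectors of $\dot{\mathcal{G}}$, these together with $c,d$ generate $\wg$, so only the relations require verification. For each pair $X,Y$ among the generators, $[X\otimes t^m,Y\otimes t^n]$ is determined by $[X,Y]_{\dot{\mathcal{G}}}$ and the scalar $(X|Y)$ that contributes the central term. The first relation then follows from $[\alpha_i^\vee,\alpha_j^\vee]=0$ together with the restriction of $(\cdot|\cdot)$ to $\dot H$; the second from $[h,e_\alpha]=\alpha(h)e_\alpha$ together with $(\dot H|\dot{\mathcal{G}}_\alpha)=0$; the fifth from $[\dot{\mathcal{G}}_\alpha,\dot{\mathcal{G}}_\beta]\subseteq\dot{\mathcal{G}}_{\alpha+\beta}$ and the vanishing of $(\cdot|\cdot)$ on $\dot{\mathcal{G}}_\alpha\times\dot{\mathcal{G}}_\beta$ unless $\alpha+\beta=0$.

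The real content lies in the third and fourth relations. One normalizes the root vectors $e_\alpha$ so that $(e_\alpha|e_{-\alpha})=\epsilon(\alpha,-\alpha)\cdot 2/(\alpha|\alpha)$; invariance of $(\cdot|\cdot)$ then forces $[e_\alpha,e_{-\alpha}]$ to be the prescribed multiple of $\gamma^{-1}(\alpha)$, which yields the third relation after tensoring with $A$. For the fourth, a case analysis driven by Lemma 2.1(ii) shows that the factor $1+\delta_{1,(p(\alpha)|p(\beta))}$ correctly captures the structure-constant magnitudes in $C_l$: if either of $\alpha,\beta$ is long then $p$ kills it and $(p(\alpha)|p(\beta))=0$, giving factor $1$; if $\alpha,\beta$ are both short with $\alpha+\beta$ long then $p(\alpha)=p(\beta)$ so $(p(\alpha)|p(\beta))=1$ and the factor is $2$, matching the short-short-to-long structure constant $\pm 2$; if $\alpha,\beta,\alpha+\beta$ are all short then $|(p(\alpha)|p(\beta))|=1/2$ and the factor is again $1$.

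The sign part is where the cocycle $\epsilon=\epsilon_0\circ f$ is engineered to function. Anti-symmetry of $[e_\alpha,e_\beta]$ is exactly Lemma 2.2(ii), and the 2-cocycle property of $\epsilon$ asserted after its definition enforces Jacobi on the span of the $e_\alpha$'s. The specific twist $f(\alpha,\beta)=(-1)^{(s(\alpha)|\beta)+(p(\alpha)|p(\beta)+p(\alpha+\beta))}$ is designed so that, in every configuration of Lemma 2.1(ii), the sign coming from $\epsilon(\alpha,\beta)$ matches the one forced by a Chevalley-type realization; the additivity properties of $s$ and the controlled behavior of $p$ on sums feed directly into the exponent of $f$ to produce the correct output.

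The main obstacle will be verifying sign agreement in the short-short-to-short case, particularly sub-cases (a) and (b) of Lemma 2.1(ii)(3), where $p(\alpha+\beta)$ differs from $p(\alpha)\pm p(\beta)$ depending on the sign of $(p(\alpha)|p(\beta))$. Tracking $s(\alpha+\beta)-s(\alpha)-s(\beta)$ through the exponent of $f$ and combining with the known values of $\epsilon_0$ on simple-root pairs will produce the claim after a finite, if case-heavy, enumeration.
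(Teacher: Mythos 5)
The paper offers no proof of this proposition at all --- it is stated bare in Section 2.6 --- so there is no argument of the authors' to compare yours against; your proposal is already more explicit than the source. Your skeleton is sound: the reduction to $\dot{\mathcal G}$ via (2.1)--(2.3), the first, second and fifth relations, and in particular your magnitude analysis of the factor $1+\delta_{1,(p(\a)|p(\b))}$ against the $C_l$ root strings (long--short gives string length $1$ and factor $1$; short--short--to--long gives the string $\b-\a,\b,\b+\a$, structure constant $\pm2$, and $(p(\a)|p(\b))=1$; short--short--to--short gives $\pm\frac12\neq1$ and factor $1$) are all correct.

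The genuine gap is that the sign verification, which is the entire nontrivial content of the proposition, is asserted rather than carried out. Two specific points. First, ``the 2-cocycle property of $\epsilon$ enforces Jacobi'' is not sufficient here: in the non-simply-laced case the Jacobi identity on triples $e_\a,e_\b,e_\r$ couples the signs $\epsilon(\cdot,\cdot)$ with the multiplicities $1+\delta_{1,(p(\cdot)|p(\cdot))}$ and with the case structure of Lemma 2.1(ii) (note also that $f$ is not bimultiplicative, since $s$ and $p$ are not additive, so the ``2-cocycle'' claim for $\epsilon=\epsilon_0\circ f$ is itself something to be checked, not quoted). Second, your treatment of the third relation by normalizing $(e_\a\,|\,e_{-\a})=\epsilon(\a,-\a)\cdot 2/(\a\,|\,\a)$ is not an independent degree of freedom: rescaling $e_\a\mapsto c_\a e_\a$ changes the constants in the fourth relation by $c_\a c_\b/c_{\a+\b}$ at the same time as it changes $(e_\a\,|\,e_{-\a})$ by $c_\a c_{-\a}$, so one must prove that the full system of constants $\{\epsilon(\a,\b)(1+\delta_{1,(p(\a)|p(\b))})\}$ together with the prescribed values of $[e_\a,e_{-\a}]$ is simultaneously realizable in a single copy of $\dot{\mathcal G}$ --- equivalently, that the proposed brackets satisfy Jacobi and hence define a Lie algebra with root system $C_l$. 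Until that enumeration (or an appeal to the vertex construction of Section 3, run in the opposite logical direction) is actually performed, the proposition is restated rather than proved.
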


\section{Vertex construction of Lie algebra of type $C_l^{(1)}$}

\subsection{} Let $H(m)\;(m\in\ZZ)$ be an isomorphic copy of $\dot H$.
Set $\dot H_S:=\hbox{Span}_\CC\{\,\a_i\mid 1\le i\le
l{-}1\,\}$ and $H_S(n-\frac12)\;(n\in\ZZ)$ is an
isomorphic copy of $\dot H_S$.

Define a Lie algebra
$$\widehat{H}=\bigoplus_{m\in\ZZ}H(m)\oplus
\bigoplus_{n\in\ZZ}H_S(n-\frac{1}{2})\oplus \CC c,$$ with
Lie bracket
$$\begin{array}{rcl}
{\bigl[\,\widetilde{H}, c\,\bigr]}&=&0,\\
{\left[\,a(m), b(n)\,\right]}&=&m\,(a | b)\,\delta_{m,-n}c.
\end{array}$$

 Let
$$\widehat{H}^-=\bigoplus_{m\in\ZZ_-}H(m)\oplus \bigoplus_{n\in\ZZ_-}H_S(n+\frac{1}{2}),$$
and let $S(\widehat{H}^-)$ be the symmetric algebra generated by
$\widehat{H}^-$. Then $S(\widehat{H}^-)$ is an
$\widehat{H}$-module with the action
$$c\cdot v=v,\quad a(m)\cdot v=a(m)v, \quad\forall\; m<0,$$
and
$$a(m)\cdot b(n)=m\,(a, b)\,\delta_{m+n,0},  \quad \forall\; m\ge0,\; n<0,$$
where $a,\; b\in H$, $m,\;n\in\frac{1}{2}\ZZ$.

\subsection{} We form a group algebra $\CC[P]$
with base elements $e^{h} (h\in P)$, and the multiplication
$$e^{h_1}e^{h_2}=e^{h_1+h_2},\quad\forall \;h_1,\; h_2\in P.$$

Set
$$V(P):=S(\widehat{H}^-)\otimes \CC[P]$$
and extend the action of $\widehat{H}$ to space $V(P)$ by
$$a(m)\cdot(v\otimes e^{r})=(a(m)\cdot v)\otimes e^{r},\quad \forall\;
m \in\frac{1}{2}\ZZ^*;$$ and define
$$a(0)\cdot(v\otimes e^{r})=(a | r)\,v\otimes e^{r},$$
which makes $V(P)$ into a $\widehat H$-module.

\subsection{} For $r\in P, \;\a\in Q$, define $\CC$-linear
operators as
$$\begin{array}{rcl}
e^{\alpha}\cdot\left(v\otimes e^{r}\right)&=&v\otimes e^{\alpha+r},\\
z^{\alpha}\cdot \left(v\otimes e^{r}\right)&=&z^{(\alpha | r)}\,v\otimes e^{r},\\
\epsilon_\alpha\cdot\left(v\otimes e^{r}\right)&=&(-1)^{(s(\alpha) | r)}\,\epsilon_0(\alpha, r)\,v\otimes e^{r},\\
a(z)&=&\sum_{j\in\ZZ}a(j)\,z^{-2j},\\
E^\pm(\alpha, z)\cdot\left(v\otimes e^{r}\right)&=&
\left(\,\exp\bigl(\mp\dsum_{n=1}^\infty\dfrac{1}{n}z^{\mp 2n}\alpha(\pm n)\bigr)\cdot v\,\right)\otimes e^r,\\
F^\pm(\alpha, z)\cdot\left(v\otimes e^{r}\right)&=&
\left(\,\exp\bigl(\mp\dsum_{n=0}^\infty\dfrac{2}{2n{+}1}z^{\mp(2n{+}1)}\alpha\bigl(\pm\dfrac{2n{+}1}{2}\bigr)\bigr)\cdot
v\,\right)\otimes e^r.
\end{array}$$
Then $a(z),\; E^\pm(\alpha, z), \;F^\pm(\alpha, z)\in (\hbox{End}
V(P))[[z, z^{-1}]]$.

As usual, we shall adopt the notation of
normal ordering product
$$:a(i)b(j):=\left\{\begin{array}{ll}
a(i)b(j),& \hbox{if} ~~i\leq j,\\
b(j)a(i),& \hbox{if} ~~j<i,\\
\end{array}\right.$$
where $a,b\in L$ and $i,j\in\frac{1}{2}\ZZ.$

\subsection{} Let  $\widetilde{V}(P)$ be the formal
completion of $V(P)=S(\widehat{H}^-)\otimes \CC[P]$. We give some
vertex operators on $\widetilde{V}(P)$:

(1) \ For $\alpha\in \dot \Delta\cup\{\,0\,\}$, set
$$Y(\alpha, z)=E^-(\alpha,z)E^+(\alpha,z)
F^-(p(\alpha),z)F^+(p(\alpha),z),$$
$$Z^\epsilon(\alpha,z)=z^{(\a |
\a)}e^{\a}z^{2\a}\epsilon_\a,$$
$$X^\epsilon(\alpha, z):=Y(\alpha, z)\otimes Z^\epsilon(\a,z).$$

(2) \ For $\alpha,\;\beta\in \dot \Delta$, define
$$X^\epsilon(\a, \b, z, w) =\;:Y(\a, z)Y(\b,w):\otimes\;Z^\ep(\a+\b, w).$$

\subsection{} The Laurent series of operators
$X^\ep(\a, z)$ is denoted by $$X^\ep(\a, z)=\sum_{k=-\infty}^\infty
X^\ep_{\frac{k}{2}}(\a)\,z^{-k}.$$ Then $\forall\; k\in\ZZ$,
$X^\ep_{\frac{k}{2}}(\a)$ is an operator on $V(P)$. Note that
$X^\ep_n(\a)$ acts as an operator on $V(P)$ in the following way:
$$
X^\epsilon_n(\alpha)\cdot(v\otimes
e^r)=\,\ep(\a,r)\,Y_{n+\frac{1}{2}(\a | \a)+(\a|
r)}(\a)\,(v)\otimes e^{\a+r}, \quad
\forall\;v\otimes e^r\in V(P).$$

\subsection{} For $v=a_1(-n_1)a_2(-n_2)\cdots
a_p(-n_p)\otimes e^r\in V(L)$, define the degree action of $d$ on
$V(P)$ by
$$
d\cdot (v\otimes
e^r)=\left(\hbox{deg}\,(v)-\frac{1}{2}(r\,|\,r)\right)\,v\otimes
e^r,$$ where $\hbox{deg}\,(v)=-\sum_{i=1}^pn_i$.

The number $\hbox{deg}\,(v)-\frac{1}{2}(r\,|\,r)$ is called the
degree of $v\otimes e^r$ and denoted by $\deg\,(v\otimes e^r)$.

\subsection{}
\begin{proposition} The affine Lie algebra $\widetilde{\mathcal  G}$
of type $C_l^{(1)}$ is homomorphic to the Lie algebra $J$ generated by
operators $\a^\vee(n),X^\epsilon_{n}(\alpha), c,
d\;(\alpha\in\dot\Delta, n\in\ZZ)$ on $V(P)=S(\widehat{H}^-)\otimes
\CC[P]$, i.e., there exists a unique Lie algebra homomorphism $\pi$ from $\widetilde{\mathcal  G}$ to the Lie subalgebra $J$ of $End(V(P))$ such that
$$\begin{array}{rcl}
\pi(\gamma^{-1}(\alpha_i)\otimes t^{n})&=& \frac{2}{(\a_i|\a_i)}\alpha_i(n),\\
\pi(e_{\alpha}\otimes {t^{n}})&=&X^\ep_{n}({\alpha}),\\
\pi(c)&=&\id,\\
\pi(d)&=&d,
\end{array}$$
that is, $V(P)$ is a $\widetilde{\mathcal  G}$-module.
\end{proposition}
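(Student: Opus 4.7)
The plan is to verify that the specified images of the generators of $\wg$ satisfy the defining relations in Proposition~2.3; the existence and uniqueness of $\pi$ then follow from the presentation of $\wg$ on those generators. Several relations are essentially automatic: the brackets among $\a_i^\vee(m)$ are built into the definition of $\widehat H$ and its action on $S(\widehat H^-)$, $\pi(c)=\id$ holds by construction, and $[d, X^\ep_n(\a)] = n X^\ep_n(\a)$ follows from the $d$-grading of \S2.6. The mixed bracket $[\a_i^\vee(m), X^\ep_n(\a)] = \a(\a_i^\vee) X^\ep_{m+n}(\a)$ I would handle at the generating-series level: since $\a_i^\vee(m)$ annihilates $e^\a$, $\ep_\a$ and $z^{2\a}$, it suffices to commute it past the four exponentials $E^\pm(\a,z)$, $F^\pm(p(\a),z)$, which a standard exponential identity converts into a scalar multiple of $Y(\a,z)$ with the correct $z$-shift.

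The core of the argument is the vertex--vertex bracket $[X^\ep_m(\a), X^\ep_n(\b)]$. I would reorder $Y(\a,z)Y(\b,w)$ via $e^A e^B = e^B e^A e^{[A,B]}$ applied to the central commutators $[E^+(\a,z), E^-(\b,w)]$ and $[F^+(p(\a),z), F^-(p(\b),w)]$, obtaining
\begin{equation*}
Y(\a,z)\,Y(\b,w) \;=\; (1-w/z)^{(\a|\b)+(p(\a)|p(\b))}\,(1+w/z)^{(\a|\b)-(p(\a)|p(\b))}\;{:Y(\a,z)Y(\b,w):}\,,
\end{equation*}
and then multiply by $Z^\ep(\a,z)Z^\ep(\b,w)$, which contributes the cocycle $\ep_0(\a,\b)$, the sign $(-1)^{(s(\a)|\b)}$, and monomial weights in $z,w$. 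Comparing with the opposite ordering via the antisymmetry $\ep(\a,\b)=-\ep(\b,\a)$ from Lemma~2.2(ii) produces a formal delta-function identity, from which the mode commutator is extracted. The case analysis is then driven by Lemma~2.1: if $\a+\b\notin\dot\Delta\cup\{0\}$ both exponents are non-negative, the prefactor is polynomial, and the commutator vanishes; if $\a$ or $\b$ is long, Lemma~2.1(ii)(1) gives $(\a|\b)=-1$ and $(p(\a)|p(\b))=0$, so a single simple pole yields $\ep(\a,\b)X^\ep_{m+n}(\a+\b)$; the short--short cases (ii)(2)(3) produce the factor $1+\de_{1,(p(\a)|p(\b))}$ through the pole order at $z=w$; and $\b=-\a$ produces a double pole recovering both $\gamma^{-1}(\a)(m{+}n)$ and the central term $m\,\de_{m+n,0}c$, weighted by $\ep(\a,-\a)\cdot 2/(\a|\a)$.

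The principal obstacle is the bookkeeping of the twist between $F^\pm$ exponentials at different roots: the non-additivity $s(\a+\b)-s(\a)-s(\b)\in\dot Q_L$ from Lemma~2.1(ii) must be absorbed into the sign $f(\a,\b)=(-1)^{(s(\a)|\b)+(p(\a)|p(\b)+p(\a+\b))}$ of Lemma~2.2(i). Confirming in each subcase of Lemma~2.1 that this sign produces exactly the structure constants demanded by Proposition~2.3 is the calculation-heavy step; once this is verified, $\pi$ is well defined on generators and respects all of the defining brackets, completing the proof.
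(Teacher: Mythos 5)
The paper states this proposition without giving any proof at all (it is the standard mixed homogeneous--principal vertex construction, implicitly deferred to the literature cited in the introduction), so there is no argument of the authors' to compare yours against line by line. That said, your plan is the correct and standard one: reduce to the presentation of Proposition~2.3, dispose of the Heisenberg, central, and degree relations directly, and extract the vertex--vertex brackets from a reordering identity plus the antisymmetry of $\epsilon$. Your reordering factor
$(1-w/z)^{(\a|\b)+(p(\a)|p(\b))}(1+w/z)^{(\a|\b)-(p(\a)|p(\b))}$
is exactly right for the conventions of \S3.3 (the $E^\pm$ contribute $(1-w^2/z^2)^{(\a|\b)}$ because of the $z^{-2j}$ grading, and the $F^\pm$ contribute $\bigl(\tfrac{1-w/z}{1+w/z}\bigr)^{(p(\a)|p(\b))}$), and your case analysis of the pole orders reproduces the factor $1+\de_{1,(p(\a)|p(\b))}$ correctly. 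Two points deserve explicit attention beyond what you wrote. First, for $m=0$ the bracket $[\a_i^\vee(0),X^\ep_n(\a)]$ does not come from commuting past the exponentials (which contain no zero modes) but from the action of $\a_i^\vee(0)$ on the lattice factor $e^{\a+r}$ versus $e^{r}$; this is harmless but is a different mechanism than the one you describe. Second, the step you defer as ``calculation-heavy'' is genuinely the crux: when the pole sits at $z=-w$ one must use $F^\pm(\gamma,-w)=F^\pm(-\gamma,w)$ and $E^\pm(\gamma,-w)=E^\pm(\gamma,w)$ to see that $:Y(\a,-w)Y(\b,w):$ collapses to $Y(\a+\b,w)$, and it is precisely the discrepancies $s(\a+\b)-s(\a)-s(\b)$ and $p(\a+\b)\ne p(\a)+p(\b)$ catalogued in Lemma~2.1(ii), together with the sign $f$, that make this work. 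A complete proof must run through each subcase of Lemma~2.1(ii) and also the case $\b=-\a$ (where the double pole must yield both $\gamma^{-1}(\a)(m{+}n)$ and the central term with coefficient $2/(\a|\a)$). You have correctly identified where all the difficulty lives, so I would call this a sound skeleton rather than a gap, but the proposal as written is a plan, not yet a proof.
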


\section{Some computations needed}
\begin{lemma}
For any $l$,  if $\Lambda_s$ is the basic weight of $C_l^{(1)}$, then we have
\beq
\dim_q(L(\Lambda_s))&=&\dim_q(L(\Lambda_{l-s})), \\
\dim_q(L(\Lambda_s))&=&\prod_{n=1}^\infty\frac{(1-q^{2(l+2)n})(1-q^{2(l+2)n-2-2s})(1-q^{2(l+2)n-2l-2+2s})}{(1-q^{n})}.\eeq
\end{lemma}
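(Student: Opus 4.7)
The plan is to derive the formula of part~(ii) from the Weyl--Kac character formula together with the Jacobi triple product identity; part~(i) will then be a direct corollary, since the resulting product is manifestly symmetric under $s\leftrightarrow l-s$ (it merely interchanges the two factors $1-q^{2(l+2)n-2-2s}$ and $1-q^{2(l+2)n-2l-2+2s}$). Alternatively, part~(i) has an independent proof via the order-two graph automorphism $\sigma:\alpha_i\mapsto\alpha_{l-i}$ of the extended Dynkin diagram of $C_l^{(1)}$, which lifts to a Lie-algebra automorphism of $\wg$ commuting with the derivation $d$ and sending $\Lambda_s$ to $\Lambda_{l-s}$, giving a degree-preserving isomorphism $L(\Lambda_s)\cong L(\Lambda_{l-s})$.

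For part~(ii), I would start from the Weyl--Kac character formula
\[
\text{ch}\,L(\Lambda_s)=\frac{\sum_{w\in\widehat W}\epsilon(w)\,e^{w(\Lambda_s+\widehat\rho)-\widehat\rho}}{\prod_{\a\in\widehat\Delta_+}(1-e^{-\a})^{\text{mult}\,\a}},
\]
and decompose the affine Weyl group as $\widehat W=W\ltimes T_{Q^\vee}$, where $W$ is the finite Weyl group of type $C_l$ and $T_{Q^\vee}$ is translation by the coroot lattice. Because $\Lambda_s$ has level one, the inner sum over translations converts the numerator into a theta function of $Q^\vee$, shifted by $\Lambda_s+\widehat\rho$, with modulus $k+h^\vee=1+(l+1)=l+2$. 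After the principal $q$-specialization, the finite $W$-alternating sums cancel against the finite-root factors of the denominator, and what survives is
\[
\dim_q L(\Lambda_s)=\frac{1}{\prod_{n\geq1}(1-q^n)}\sum_{n\in\ZZ}(-1)^n q^{(l+2)n^2+(l-2s)n}.
\]

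A final application of the Jacobi triple product identity
\[
\sum_{n\in\ZZ}(-1)^n x^{n^2}y^n=\prod_{n\geq1}(1-x^{2n})(1-x^{2n-1}y)(1-x^{2n-1}y^{-1}),
\]
taken at $x=q^{l+2}$ and $y=q^{l-2s}$, turns the right-hand side into the claimed product form, since a direct check gives $x^{2n-1}y=q^{2(l+2)n-2-2s}$ and $x^{2n-1}y^{-1}=q^{2(l+2)n-2l-2+2s}$.

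The main obstacle will be the $q$-specialization step: the naive substitution $e^{-\a_i}=1$ for the finite simple roots produces $0/0$ at every positive real root of $\dot\Delta_+$, so one must invoke the standard limiting procedure (equivalently, the Macdonald identity for $C_l^{(1)}$) to verify that all finite-root contributions cancel and only the single Euler factor $\prod(1-q^n)$ survives in the denominator. A secondary subtlety is the careful bookkeeping of $\widehat\rho$ and of the coroot lattice of $C_l$ so that the modulus $l+2=k+h^\vee$ and the linear shift $l-2s$ emerge with the correct signs in the theta sum.
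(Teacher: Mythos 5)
The paper does not actually prove this lemma: it simply points the reader to Kac's Proposition 10.10, i.e.\ the product form of the principally specialized character, $\dim_q L(\Lambda)=\prod_{\alpha\in\widehat\Delta_+^\vee}\bigl((1-q^{\langle\Lambda+\rho,\alpha\rangle})/(1-q^{\langle\rho,\alpha\rangle})\bigr)^{\mathrm{mult}\,\alpha}$, from which the stated identity is a finite bookkeeping exercise (count positive coroots of each principal height $\langle\rho,\alpha\rangle=n$ and of each shifted height $\langle\Lambda_s+\rho,\alpha\rangle$, and cancel). Your route is genuinely different: you go back to the Weyl--Kac character formula, decompose $\widehat W=W\ltimes T$, specialize, and finish with the Jacobi triple product. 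Your endgame is correct and easy to check: with $x=q^{l+2}$, $y=q^{l-2s}$ the triple product does reproduce the three numerator factors (and $l+2=k+h^\vee$ with $k=1$, $h^\vee=l+1$ is the right modulus), the resulting product is manifestly symmetric under $s\leftrightarrow l-s$, and the diagram flip $\alpha_i\mapsto\alpha_{l-i}$ of $C_l^{(1)}$ gives a clean independent proof of part (i). What each approach buys: Kac's specialization formula avoids theta functions and the $0/0$ issue entirely, while your route explains \emph{why} a rank-one theta sum of modulus $l+2$ appears, which is conceptually closer to how these products are used later in the paper.

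The one genuine gap is the step you yourself flag as ``the main obstacle.'' The numerator of the Weyl--Kac formula for $C_l^{(1)}$ is a $W$-alternating sum of rank-$l$ theta functions, and the assertion that after principal specialization it collapses to the single rank-one sum $\sum_{n\in\ZZ}(-1)^nq^{(l+2)n^2+(l-2s)n}$ while the denominator contributes exactly one Euler factor $\prod_{n\ge1}(1-q^n)$ is precisely the content of the Lepowsky numerator formula (equivalently, the relevant Macdonald identity); it is not a routine cancellation, and as written it is asserted rather than derived. Until that collapse is carried out (or replaced by a citation to the numerator formula, or by switching to Kac's product form of the specialization, which sidesteps it), the proof of part (ii) is a plan rather than a proof. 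The intermediate formula itself is correct --- it checks against $A_1^{(1)}$, where it reduces to $\sum(-1)^nq^{3n^2+n}/\prod(1-q^n)=\prod(1-q^{2n-1})^{-1}$, and against low-order weight multiplicities for $C_2^{(1)}$ --- so the gap is one of justification, not of outcome.
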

For the definition of $\dim_q$, one can refer \cite{K} (see p. 183,
Proposition 10.10).

Define $q$-series
\beq \kappa_q(l,r)&=&\sum_{n\in\ZZ}q^{ln^2-rn},\eeq
for $0<r\leq l$. If $r=l$, then
\beq \kappa_q(l,l)&=&2\prod_{n=1}^\infty\frac{(1-q^{4n})^2}{(1-q^{2n})},\eeq
by Gauss identity
\beq \sum_{n\in\ZZ}q^{2n^2-n}&=&\prod_{n=1}^\infty\frac{(1-q^{2n})^2}{(1-q^n)}.\eeq

Suppose that $V=S(\alpha(-1),\alpha(-2),\cdots)\otimes
\CC[\ZZ\alpha]$ with $(\alpha|\alpha)=2$, then $V$ is an irreducible
$A_1^{(1)}$-module isomorphic to $L(\Lambda_0)$ (one can see [FLM]
for details). The degree of $v=\alpha(-n_1)\cdots\alpha(-n_k)\otimes
e^{n\alpha}\in V$ is defined as $-n_1-\cdots-n_k-n^2$ and weight of
$v$ is $-(n_1+\cdots+n_k+n^2)\delta+n\alpha$. Hence \beq \text{ch}
V&=&e^{\Lambda_0}\frac{1}{\prod_{n=1}^\infty(1-e^{-n\delta})}\sum_{n\in\ZZ}e^{-n^2\delta+n\alpha}.
\eeq Moreover,
\begin{eqnarray}
\text{ch}
L(\Lambda_0)&=&e^{\Lambda_0}\frac{\sum_{n\in\ZZ}e^{-(3n^2+n)\delta+3n\alpha}
-\sum_{n\in\ZZ}e^{-(3n^2+n)\delta-(3n+1)\alpha}}{\prod_{n=1}^\infty(1-e^{-n\delta})(1-e^{-n\delta+\alpha})(1-e^{-(n-1)\delta-\alpha})}.
\end{eqnarray}
If $e^{-\delta}, e^{-\alpha}$ are specialized as $q^l,q^r$, respectively, then $V\cong L(\Lambda_0)$ implies:
\begin{lemma}
If $0<r<l$, then
\begin{eqnarray}
\kappa_q(l,r)&=&\prod_{n=1}^\infty\frac{(1-q^{2ln})(1-q^{4ln-2(l-r)})(1-q^{4l(n-1)+2(l-r)})}{(1-q^{2ln-l-r})(1-q^{2l(n-1)+l+r})}.
\end{eqnarray}
\end{lemma}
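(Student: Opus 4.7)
The plan is to derive the identity as an essentially direct consequence of the Jacobi triple product identity --- which is the real content of the sentence ``$V\cong L(\Lambda_0)$ implies'' preceding the statement. First I would observe
$$\kappa_q(l,r)=\sum_{n\in\ZZ}(q^l)^{n^2}(q^{-r})^n$$
and apply the Jacobi triple product identity
$$\sum_{n\in\ZZ}Q^{n^2}z^n=\prod_{n\geq 1}(1-Q^{2n})(1+zQ^{2n-1})(1+z^{-1}Q^{2n-1})$$
with $Q=q^l$ and $z=q^{-r}$. This immediately yields
$$\kappa_q(l,r)=\prod_{n\geq 1}(1-q^{2ln})(1+q^{(2n-1)l-r})(1+q^{(2n-1)l+r}).$$

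The second step is cosmetic: rewrite each factor $1+q^{a}$ as $(1-q^{2a})/(1-q^{a})$. The two denominator factors $(1-q^{(2n-1)l-r})$ and $(1-q^{(2n-1)l+r})$ are exactly $(1-q^{2ln-l-r})$ and $(1-q^{2l(n-1)+l+r})$, matching the claimed denominator; the two new numerator factors $(1-q^{2((2n-1)l-r)})$ and $(1-q^{2((2n-1)l+r)})$ become $(1-q^{4l(n-1)+2(l-r)})$ and $(1-q^{4ln-2(l-r)})$, which together with the already-present $(1-q^{2ln})$ reproduce the stated numerator.

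An alternative, closer to the framing in the excerpt, is to equate the bosonic-Fock character (3.6) for $V$ with the Weyl--Kac character (3.7) for $L(\Lambda_0)$ of $\widehat{sl}_2$, and then specialize $e^{-\delta}=q^l,\;e^{-\alpha}=q^r$. The left side contributes $\kappa_q(l,r)/\prod_{n\geq 1}(1-q^{ln})$, so one solves for $\kappa_q(l,r)$ as a theta quotient and factors the resulting specialized theta numerator into the three product factors of the lemma via one further application of the Jacobi triple product. This route is more conceptual but costs more index manipulation.

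The main obstacle is purely bookkeeping: the statement writes the same exponent in two equivalent forms ($(2n-1)l-r=2ln-l-r$ and $(2n-1)l+r=2l(n-1)+l+r$, and analogously for the numerator), so the verification reduces to lining these up carefully and checking that all product indices still run over $n\geq 1$. No representation-theoretic input beyond the classical Jacobi triple product is required; the hypothesis $0<r<l$ is used only to guarantee that every exponent in $\kappa_q(l,r)=\sum_{n\in\ZZ}q^{ln^2-rn}$ is non-negative, so that both sides live in the ring of formal power series in $q$.
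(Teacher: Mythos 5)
Your primary argument is correct and is genuinely more elementary than what the paper does. The paper's entire proof is the remark that the lemma ``can easily be proved using the quintuple product identity'', i.e.\ the intended route is the one you sketch only as an alternative: equate the Fock-space character (4.6) of $V$ with the Weyl--Kac character (4.7) of $L(\Lambda_0)$ for $\widehat{sl}_2$, specialize $e^{-\delta}=q^l$, $e^{-\alpha}=q^r$, and then invoke the quintuple product identity to factor the resulting two-term theta numerator before cancelling against the denominator $\prod(1-q^{ln-r})(1-q^{l(n-1)+r})$ split into even- and odd-index factors. Your main route bypasses both the representation theory and the quintuple product entirely: since $\kappa_q(l,r)$ is a single theta series, the Jacobi triple product gives
$$\kappa_q(l,r)=\prod_{n\geq1}(1-q^{2ln})\bigl(1+q^{l(2n-1)-r}\bigr)\bigl(1+q^{l(2n-1)+r}\bigr),$$
and rewriting $1+q^a=(1-q^{2a})/(1-q^a)$ reproduces the five-factor shape of the lemma exactly; I have checked that your exponent identifications $l(2n-1)\mp r=2ln-l-r,\ 2l(n-1)+l+r$ and $2l(2n-1)\mp 2r=4l(n-1)+2(l-r),\ 4ln-2(l-r)$ are all correct. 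What your route buys is a self-contained two-line verification exposing the identity as the triple product in disguise; what the paper's route buys is consistency with the surrounding narrative (it is the step where $V\cong L(\Lambda_0)$ actually gets used) and it explains why the reference \cite{CS} on the quintuple product appears. One small correction to your closing remark: the hypothesis $0<r<l$ is not merely about non-negativity of the exponents $ln^2-rn$; it is what makes $l(2n-1)-r>0$, so that the factor $1+q^{l(2n-1)-r}$ is a unit in $\CC[[q]]$ and the substitution $1+x=(1-x^2)/(1-x)$ is legitimate --- at $r=l$ the $n=1$ factor is $1+q^0=2$ and the rewriting degenerates, which is exactly why the paper treats $\kappa_q(l,l)$ separately via the Gauss identity (4.4)--(4.5).
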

\begin{proof}
 This lemma  can easily be proved using the  quintuple product identity (see \cite{CS}).
\end{proof}

\section{The module structure}

\subsection{} Let
$\alpha_0\in H^*$ such that $\{\alpha_0, \alpha_1, \cdots,
\alpha_l\}$ is the simple root system of affine Lie algebra
$\widetilde{\mathcal  G}$ and $ \alpha_0(\alpha_0^\vee)=2$,
$\alpha_0(\alpha_1^\vee)=-2$, $\alpha_0(d)=1$ and
$\alpha_0(\alpha_j^\vee)=\alpha_0(c)=0$ ($2\leq j\leq l$). Then
$\delta=\alpha_0+2\alpha_1+2\alpha_2+\cdots+2\alpha_{l-1}+\alpha_l$
is the primitive imaginary root of $\widetilde{\mathcal  G}$. Let
$\Lambda_i\in H^*$ be such that
$$\Lambda_i(\alpha_j^\vee)=\delta_{ij},\quad \Lambda_i(d)=0\quad (0\leq j\leq l).$$

\begin{lemma} With respect  to the Cartan subalgebra $H$ of
$\widetilde{\mathcal G}$, $V(P)$ has the weight space decomposition
$$V(P)=\sum_{\lambda\in {\hbox{\rm weight}}(V(P))}V(P)_\lambda,$$
and the weight space $V(P)_\lambda$ has a basis $v\otimes
e^r$, where $r\in P, \; v \in S(\dot {\mathcal H}^-)$, and
$$\lambda=\Lambda_0+\left(\deg \,(v)-\frac{1}{2}\,(r\,|\,r)\right)\delta+r,$$
so $\deg \,(v)$ and $r$ are uniquely determined by $\lambda$.
\end{lemma}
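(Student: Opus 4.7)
The plan is to exhibit an explicit PBW-type basis of $V(P)$ consisting of simultaneous eigenvectors for the Cartan subalgebra $H=\dot H\oplus\CC c\oplus\CC d$, read off the weight of each basis vector directly from the definitions in Section~3, and then verify that the data $(r,\deg(v))$ can be reconstructed from the weight $\lambda$.

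First I would fix the tensor-product basis of $V(P)=S(\widehat{H}^-)\otimes\CC[P]$ whose elements are $v\otimes e^{r}$ with $r\in P$ and $v$ an ordered PBW monomial in the generators $h(m)$ ($m\in\ZZ_{<0}$) and $h(n{+}\tfrac12)$ ($n\in\ZZ_{<0}$) of the symmetric algebra $S(\widehat{H}^-)$. Using the formulas of Sections~3.2 and~3.6, I would check that every Cartan generator acts diagonally on such a vector: $c$ acts as the identity, contributing the scalar $1=\Lambda_{0}(c)$; any $h\in\dot H$ acts through $h(0)$, which by the defining formula multiplies $v\otimes e^{r}$ by $(h\,|\,r)=r(h)$; and $d$ multiplies $v\otimes e^{r}$ by $\deg(v)-\tfrac12(r\,|\,r)$. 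Combining these three scalars with the standard evaluations $\Lambda_{0}|_{\dot H}=\delta|_{\dot H}=0$, $\Lambda_{0}(d)=\delta(c)=0$, $\delta(d)=1$, one sees that $v\otimes e^{r}$ is a weight vector of weight
$$\Lambda_{0}+\bigl(\deg(v)-\tfrac12(r\,|\,r)\bigr)\delta+r,$$
which is exactly the formula asserted by the lemma.

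Since every member of the chosen basis is already a weight vector, the decomposition $V(P)=\bigoplus_{\lambda}V(P)_{\lambda}$ is immediate, and $V(P)_{\lambda}$ is spanned by those $v\otimes e^{r}$ whose weight equals $\lambda$. For uniqueness, given $\lambda$ I would recover $r$ by restricting $\lambda$ to $\dot H$ (under the identification $\dot H\cong\dot H^{*}$ via $\gamma$), since $\Lambda_{0}$ and $\delta$ both vanish on $\dot H$; then $\deg(v)$ is recovered as $\lambda(d)+\tfrac12(r\,|\,r)$. I do not anticipate any serious obstacle; the only minor bookkeeping point is that the half-integer modes in $\widehat{H}^-$ together with the $\tfrac12\ZZ\alpha_{l}$ summand of $P$ allow $\deg(v)$ and $\tfrac12(r\,|\,r)$ to be non-integral, so one just needs to observe that the resulting set of weights indexing the decomposition is a well-defined subset of $H^{*}$, which is routine from the explicit formulas.
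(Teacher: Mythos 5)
Your proof is correct: the paper states this lemma without proof, treating it as routine, and your argument is exactly the standard verification it leaves implicit (diagonal action of $c$, $h(0)$, and $d$ on the basis $v\otimes e^{r}$, plus recovery of $r$ from $\lambda|_{\dot H}$ and of $\deg(v)$ from $\lambda(d)$). No gaps; the remark about quarter-integer multiples of $\delta$ arising from the half-integer modes and from $\tfrac12\ZZ\alpha_l\subset P$ is the right bookkeeping point to flag.
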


\subsection{} The following describes the possible
distribution of the maximal weights of $\widetilde{\mathcal
G}$-module $V(\dot Q)$.

\begin{lemma} For any $\lambda\in P(V(\dot Q))$, we have
$$\lambda\leq \Lambda_{j}-\frac{j}{4}\delta,$$
 for some $j\in\ZZ$, where $0\leq j\leq l$.\end{lemma}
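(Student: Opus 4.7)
The plan is to reduce to the maximal weights $\mu_r := \Lambda_0 + r - \tfrac{1}{2}(r\mid r)\delta$, exploit the finite Weyl symmetry to force $r$ into the dominant chamber, and then carry out an explicit positivity computation.

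By Lemma~5.1, any $\lambda\in P(V(\dot Q))$ has the form $\lambda = \Lambda_0 + r + (\deg v - \tfrac{1}{2}(r\mid r))\delta$ with $r\in\dot Q$ and $\deg v\leq 0$. Since $\delta = \alpha_0 + 2\alpha_1 + \cdots + 2\alpha_{l-1} + \alpha_l$ is a positive root, this already yields $\lambda\leq\mu_r$, so it suffices to bound each $\mu_r$. The finite Weyl group $W$ of $\dot{\mathcal G}$ preserves $V(\dot Q)$ and carries $\mu_r$ to $\mu_{wr}$ (because $(r\mid r)$, $\Lambda_0$, $\delta$ are $W$-invariant); choosing $w$ with $s := wr$ dominant gives $s - r \in \sum_{i\geq 1}\NN\alpha_i$ and hence $\mu_r\leq\mu_s$. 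We may therefore assume $r = s$ is dominant in $\dot Q$.

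Parametrize $s$ by writing $s = (1/\sqrt{2})\sum_{i=1}^{l} m_i e_i$ with $m_1\geq m_2\geq\cdots\geq m_l\geq 0$ integers; the condition $s\in\dot Q$ is equivalent to $\sum_i m_i \in 2\ZZ$. Set
\[
j := \#\{\,i : m_i\text{ is odd}\,\}\in\{0,2,4,\ldots\}\cap\{0,1,\ldots,l\},\qquad m := \tfrac{1}{2}(s\mid s) - \tfrac{j}{4}.
\]
Using $\delta = \alpha_0 + \theta$ with $\theta = \sqrt{2}\,e_1$ the highest root of $\dot\Delta_L$, and expanding via $e_i/\sqrt{2} = \alpha_i + \cdots + \alpha_{l-1} + \tfrac{1}{2}\alpha_l$, a direct computation gives
\[
\Lambda_j - \tfrac{j}{4}\delta - \mu_s \ =\ m\alpha_0 + \sum_{k=1}^{l-1} S_k\alpha_k + \tfrac{S_l}{2}\alpha_l,
\]
where $S_k = k - (m_1+\cdots+m_k) + 2m$ for $1\leq k\leq j$ and $S_k = j - (m_1+\cdots+m_k) + 2m$ for $j < k\leq l$.

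The proof is completed by showing that $m$, each $S_k$ (for $k<l$), and $S_l/2$ are all nonnegative integers. Integrality of $m$ and of $S_l/2$ reduces to the parity congruence $\sum_i m_i^2\equiv j\pmod 4$, which follows from $m_i^2\equiv 0$ or $1\pmod 4$ according as $m_i$ is even or odd, together with the evenness of $j$. The hard part is the positivity $S_k\geq 0$; this reduces to the pointwise bound $m_i^2\geq 2\max(m_i - 1, 0) + \epsilon_i$ (with $\epsilon_i = 1$ if $m_i$ is odd and $0$ otherwise), a finite case check, combined with dominance $m_1\geq\cdots\geq m_l$ to handle $k\leq j$ and $k > j$ separately (for $k\leq j$ one also needs $m_k\geq 1$, forced by $k\leq j = \#\text{odd}$). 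These estimates give $\Lambda_j - \tfrac{j}{4}\delta - \mu_s \in \sum_{i=0}^{l}\NN\alpha_i$, whence $\mu_s\leq\Lambda_j - \tfrac{j}{4}\delta$ and the lemma follows.
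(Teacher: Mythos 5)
Your proof is correct, but it takes a genuinely different route from the paper's. Both arguments start identically, reducing to the maximal weights $\mu_r=\Lambda_0+r-\frac12(r\mid r)\delta$. From there the paper writes $r=\frac1{\sqrt2}\sum_i m_ie_i$ (its $k_i$ are the partial sums of the $m_i$) and decomposes $\frac12(r\mid r)\delta-r=\frac14\sum_i\bigl(m_i^2\delta-2m_i\theta_i\bigr)$, where $\theta_i=\sqrt2\,e_i=2\alpha_i+\cdots+2\alpha_{l-1}+\alpha_l$ are the positive long roots; each summand is bounded below by $0$ when $m_i\le0$ and by $\delta-2\theta_i$ when $m_i\ge1$ (via the factorization $(m^2-1)\delta-2(m-1)\theta=(m-1)\bigl((m+1)\delta-2\theta\bigr)$), and the surviving long roots are then pushed to $\theta_1,\dots,\theta_s$ using $\theta_p\le\theta_{p'}$ for $p\ge p'$. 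This gives the bound with $j=\#\{i:m_i\ge1\}$, with no Weyl-group reduction and no coefficientwise expansion. You instead symmetrize by $W$ to reach a dominant representative, take $j=\#\{i:m_i\text{ odd}\}$ --- a different but equally valid choice of index (for $r=\sqrt2\,e_1$ the paper produces $j=1$ while you produce $j=0$; the lemma only asks for some $j$) --- and then verify nonnegativity of every simple-root coefficient of $\Lambda_j-\frac{j}4\delta-\mu_s$ directly; your expansion of that difference and the integrality claims check out. The paper's argument is shorter and avoids case analysis; yours is more systematic and shows in addition that the index may always be taken even. One small imprecision to fix: the pointwise bound you quote, $m_i^2\ge2\max(m_i-1,0)+\epsilon_i$, summed naively only yields $S_k\ge\min(k,j)-\#\{i\le k:m_i\ge1\}$, which can be negative when $k>j$ and several even $m_i$ with $i\le k$ are nonzero (e.g.\ all $m_i=2$). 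For $i\le k$ you should instead use $m_i^2\ge2m_i-\epsilon_i$, valid for every integer $m_i\ge0$, together with $m_i^2\ge\epsilon_i$ for $i>k$; this gives $S_k\ge\min(k,j)-\#\{i\le k:m_i\text{ odd}\}\ge0$ uniformly in both cases, and with that adjustment your argument is complete.
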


\begin{proof} By Lemma 5.1,
$\lambda=\Lambda_0-\left(\,k+\frac{1}{2}(r\,|\,r)\right)\delta+r$,
where $r=\sum_{i=1}^{l-1}k_i\alpha_i+\frac{k_l}{2}\alpha_l\in{P}$ and
$k\in\frac{1}{2}\,\NN$. At first, we have
\begin{eqnarray*}
\frac12(r\,|\,r)\delta-r&=&\frac14(k_1^2+(k_2-k_1)^2+\cdots(k_{l-1}-k_{l-2})^2+(2k_l-k_{l-1})^2)\delta-\sum_{i=1}^{l-1}k_i\alpha_i-\frac{k_l}{2}\alpha_l\\
&=&\frac14[(k_1^2\delta-2k_1(2\alpha_1+2\alpha_2+\cdots+2\alpha_{l-1}+\alpha_l))\\
&&+((k_2-k_1)^2\delta-2(k_2-k_1)(2\alpha_2+\cdots+2\alpha_{l-1}+\alpha_l))\\
&&\cdots\\
&&+((k_{l-1}-k_{l-2})^2\delta-2(k_{l-1}-k_{l-2})(2\alpha_{l-1}+\alpha_l))\\
&&+(k_l-k_{l-1})^2\delta-2(k_l-k_{l-1})\alpha_l].
\end{eqnarray*}
Suppose that
$$\alpha=2\alpha_{i}+\cdots+2\alpha_{l-1}+\alpha_l<\delta.$$
If $n<0$, then $n^2\delta-2n\a>0$. If $n>1$, then
$$(n^2-1)\delta-2(n-1)\alpha=(n-1)((n+1)\delta-2\alpha)>0.$$
Hence we have
$$n^2\delta-2n\alpha\geq0$$
or
$$n^2\delta-2n\alpha\geq \delta-2\alpha.$$
So
\begin{eqnarray*}
\frac12(r\,|\,r)\delta-r&\geq&\frac
s4\delta-\frac12[(2\alpha_{p_1}+\cdots+2\alpha_{l-1}+\alpha_{l})+\cdots+(2\alpha_{p_s}+\cdots+2\alpha_{l-1}+\alpha_{l})]\\
&\geq&\frac
s4\delta-\frac12[(2\alpha_{1}+\cdots+2\alpha_{l-1}+\alpha_{l})+\cdots+(2\alpha_{s}+\cdots+2\alpha_{l-1}+\alpha_{l})]\\
&=&\frac12(\gamma_s\,|\,\gamma_s)\delta-\gamma_s,
\end{eqnarray*}
for some $s$, where
$$\gamma_s=\alpha_1+2\alpha_2+\cdots+(s-1)\alpha_{s-1}+s(\alpha_s+\cdots+\alpha_{l-1})+\frac{s}{2}\alpha_l\in P,$$
and it clear that $\Lambda_s=\Lambda_0+\gamma_s, (\gamma_s |
\gamma_s)=\frac{s}2$. Then we have
\begin{eqnarray*}
\lambda&=&\Lambda_0-\left(\,k+\frac{1}{2}(r\,|\,r)\right)\delta+r\\
&\leq&\Lambda_0-\frac{1}{2}(r | r)\delta+r\\
&\leq&\Lambda_0-\frac{1}{2}(\gamma_s\,|\, \gamma_s)\delta+\gamma_s\\
&=&\Lambda_s-\frac{s}{4}\delta
\end{eqnarray*}
for some $s \ (0\leq s\leq l)$.
\end{proof}

\begin{remark} By the result above, we know that any highest weight of $V(P)$ belongs
to the set
$$\bigcup_{s=0}^l\left\{\Lambda_s-\frac{s}{4}-\frac{p}{2}\delta \mid p\geq 0, s=0,1,\cdots,l\right\}.$$
More precisely, any highest weight vector has the form $v\otimes e^{\gamma_s}$ for some $s$.
\end{remark}

\begin{theorem}
$V(P)$ has the decomposition
$$V(P)=\bigoplus_{s=0}^l V(P)^{[s]},$$
where $V(P)^{[s]}$ is the sum of those irreducible submodules whose
highest weights $\lambda\leq \Lambda_s-\frac{s}4.$
\end{theorem}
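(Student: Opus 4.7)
The plan is to combine complete reducibility of $V(P)$ with the classification of its highest weight vectors given in Remark 5.3.

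First, I would establish that $V(P)$ is an integrable level-one $\wg$-module. Integrability follows from the operator-product structure of the vertex operators $X^\ep(\a,z)$ of Section 3: for any real root $\a$ one verifies that the modes $X^\ep_n(\a)$ act locally nilpotently on each weight vector (the normal-ordered products have sufficiently high order of zero on the diagonal to force this at level one). Together with Lemma 5.2, which shows that every weight of $V(P)$ is bounded above by some $\Lambda_s-\tfrac{s}{4}\delta$ and hence that $V(P)$ lies in category $\mathcal{O}$, Kac's complete reducibility theorem (\cite{K}, Corollary~10.7) then yields $V(P)=\bigoplus_{i}L(\mu_i)$ as a direct sum of irreducible integrable highest weight modules.

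Second, I would use Remark 5.3 to group these summands: every highest weight vector of $V(P)$ has the form $v\ot e^{\gamma_s}$ for a unique $s\in\{0,1,\cdots,l\}$ and some $v\in S(\widehat{H}^-)$, and the corresponding highest weight $\mu=\Lambda_s+(\deg v-\tfrac{s}{4})\delta$ satisfies $\mu\le \Lambda_s-\tfrac{s}{4}\delta$ since $\deg v\le 0$. Because the classical ($\dot H^*$) component of $\mu$ equals $\gamma_s$, the index $s$ is canonically determined by each irreducible summand. Setting $V(P)^{[s]}$ to be the sum of those summands carrying this fixed value of $s$ gives the direct sum decomposition $V(P)=\bigoplus_{s=0}^l V(P)^{[s]}$, and by construction each highest weight therein satisfies $\mu\le \Lambda_s-\tfrac{s}{4}\delta$.

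The main obstacle is a subtlety in the characterization: the inequality $\mu\le \Lambda_s-\tfrac{s}{4}\delta$ alone does not always pin down $s$, since a summand in the $s'$-block can satisfy this bound for another $s\ne s'$ once the $\delta$-shift is sufficiently large. The theorem's characterization should therefore be read as selecting the unique $s$ for which the classical component of $\mu$ equals $\gamma_s$, equivalently $\mu=\Lambda_s-\tfrac{s}{4}\delta-k\delta$ for some $k\in\tfrac{1}{2}\NN$. Once this interpretation is fixed, the theorem follows immediately from Kac's complete reducibility theorem and Remark 5.3.
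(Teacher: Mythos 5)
Your proposal is correct and follows essentially the same route the paper takes implicitly: Theorem 5.4 is stated without a separate argument precisely because it is meant to follow from Lemma 5.2 (the bound on maximal weights), Remark 5.3 (every highest weight vector has the form $v\otimes e^{\gamma_s}$), and complete reducibility of the integrable level-one module $V(P)$. Your added observation that the inequality $\lambda\leq\Lambda_s-\frac{s}{4}\delta$ alone does not single out $s$, and that the blocks are really indexed by the classical component $\gamma_s$ of the highest weight, is a sensible clarification of the theorem's wording rather than a departure from the paper's argument.
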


\section{Highest weight vectors}
\subsection{}
Define operators
\beq
S(\alpha,z)&=&\exp\left(\dsum_{n>0}\dfrac{\alpha(-n+\frac{1}{2})}{n-\frac{1}{2}}z^{2n-1}\right)\exp\left(-\dsum_{n>0}\dfrac{\alpha(n-\frac{1}{2})}{n-\frac{1}{2}}z^{-2n+1}\right),
\eeq
with series expansion
\beq S(\alpha,z)&=&\dsum_{n\in\frac{1}{2}\ZZ}S_n(\alpha)z^{-2n}.\eeq

\begin{lemma} For $i=1,\cdots,l-1$, we have
$$\{S_n(\alpha_i),S_m(\alpha_i)\}=S_n(\alpha_i)S_m(\alpha_i)+S_m(\alpha_i)S_n(\alpha_i)=-2\delta_{m+n,0}$$
and
$$S_n(\alpha)=(-1)^{2n}S_n(-\alpha),\; n\in\frac{1}{2}\ZZ.$$
\end{lemma}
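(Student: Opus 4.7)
The plan is to derive both identities from the operator product expansion of the series $S(\alpha_i,\cdot)$. Writing $S(\alpha_i,z)$ as the product of its creation and annihilation exponentials, the key contraction is the scalar commutator
\[
\left[-\sum_{n>0}\frac{\alpha_i(n-\frac{1}{2})}{n-\frac{1}{2}}z^{-(2n-1)},\; \sum_{n>0}\frac{\alpha_i(-n+\frac{1}{2})}{n-\frac{1}{2}}w^{2n-1}\right] \;=\; -\sum_{n>0}\frac{2}{2n-1}\left(\frac{w}{z}\right)^{2n-1},
\]
which follows from the Heisenberg relation $[\alpha_i(m-\frac{1}{2}),\alpha_i(-n+\frac{1}{2})]=(m-\frac{1}{2})\delta_{m,n}$ together with $(\alpha_i|\alpha_i)=1$ for $1\le i\le l-1$. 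Recognizing the right-hand side as the series expansion of $\log\frac{z-w}{z+w}$ via the classical identity $\frac{1}{2}\log\frac{1+u}{1-u}=\sum_{n\ge 0}\frac{u^{2n+1}}{2n+1}$, the Baker--Campbell--Hausdorff formula then yields the fundamental rearrangement
\[
S(\alpha_i,z)\,S(\alpha_i,w) \;=\; :\!S(\alpha_i,z)S(\alpha_i,w)\!:\,\cdot\, \iota_{z,w}\frac{z-w}{z+w}.
\]

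To reach the anticommutator I would combine this with the analogous rearrangement of the reversed product, whose scalar factor is $\iota_{w,z}\frac{w-z}{w+z}$. The sum of the two scalar factors equals $(\iota_{z,w}-\iota_{w,z})\frac{z-w}{z+w}$, which by the standard formal distribution identity $(\iota_{z,w}-\iota_{w,z})\frac{1}{z+w}=\frac{1}{z}\delta(-w/z)$ together with $\frac{w}{z}\delta(-w/z)=-\delta(-w/z)$ collapses to $2\,\delta(-w/z)$. Separately, at $w=-z$ every power $z^{\pm(2n-1)}$ is odd, so the exponents appearing in $:\!S(\alpha_i,z)S(\alpha_i,-z)\!:$ all vanish and the normal-ordered product specializes to the identity operator. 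Because the delta function localizes $w$ at $-z$, the anticommutator of the generating series reduces to the scalar distribution $2\,\delta(-w/z)$, and extracting the coefficient of $z^{-2n}w^{-2m}$ produces the first identity.

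For the second identity I would observe directly from the definition that substituting $-\alpha$ for $\alpha$ negates both exponentials, while substituting $-z$ for $z$ does the same because every power of $z$ appearing in the exponents is odd. Hence $S(-\alpha,z)=S(\alpha,-z)$, and equating Laurent coefficients of $z^{-2n}$ on the two sides yields $S_n(-\alpha)=(-1)^{2n}S_n(\alpha)$, which is equivalent to the stated relation.

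The main subtlety will be the bookkeeping with the two formal expansion conventions $\iota_{z,w}$ and $\iota_{w,z}$ and the delta-function identity relating them; once these are correctly handled, the remainder amounts to routine mode extraction.
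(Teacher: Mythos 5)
The paper states this lemma without proof, so there is no argument of the authors' to compare against; your operator--product route is the standard one, and nearly every step of it checks out. The contraction $[A^-(z),A^+(w)]=-\sum_{n>0}\frac{2}{2n-1}(w/z)^{2n-1}=\log\frac{z-w}{z+w}$ is correct (using $(\alpha_i\,|\,\alpha_i)=1$ for $i\le l-1$), the rearrangement $S(\alpha_i,z)S(\alpha_i,w)=\;:S(\alpha_i,z)S(\alpha_i,w):\,\iota_{z,w}\frac{z-w}{z+w}$ follows, the sum of the two expansion factors is indeed $2\delta(-w/z)$, and $:S(\alpha_i,z)S(\alpha_i,-z):$ is the identity because every exponent occurring is odd in $z$. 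Your proof of the second identity via $S(-\alpha,z)=S(\alpha,-z)$ is also correct.

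The gap is in the one step you did not carry out: ``extracting the coefficient of $z^{-2n}w^{-2m}$ produces the first identity.'' It does not. From $\{S(\alpha_i,z),S(\alpha_i,w)\}=2\delta(-w/z)=2\sum_{k\in\ZZ}(-1)^kw^kz^{-k}$ and $S(\alpha_i,z)=\sum_nS_n(\alpha_i)z^{-2n}$ one obtains
$$\{S_n(\alpha_i),S_m(\alpha_i)\}=2(-1)^{2n}\delta_{m+n,0},$$
which equals $-2\delta_{m+n,0}$ only for $n\in\ZZ+\frac12$; for $n\in\ZZ$ the constant is $+2$. This is not a slip in your delta-function bookkeeping but a discrepancy with the statement itself: since the annihilation exponential fixes $1\otimes e^r$ and the creation exponential has constant term $1$, one has $S_0(\alpha_i)(1\otimes e^r)=1\otimes e^r$, hence $\{S_0,S_0\}=2S_0^2$ acts as $+2$ on $1\otimes e^r$ and the constant cannot be $-2$ when $m=n=0$. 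The uniform statement consistent with your computation (and with the factor $(-1)^{2n}$ already present in the lemma's second identity) is $\{S_n,S_m\}=2(-1)^{2n}\delta_{m+n,0}$. You should either record this corrected constant or restrict the first identity to half-odd-integer modes; as written, your final sentence asserts an extraction that the rest of your (otherwise correct) argument contradicts.
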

\subsection{}
Define $\beta_i=\alpha_i$ for $i=1,\cdots,l-1$ and \beq
\beta_l&=&-\sum_{i=1}^{l-1}\frac{i}{l}\alpha_i,\eeq also let \beq
y_i&=&\sum_{j=i}^l2\beta_j,\quad i=1,\cdots,l.\eeq Define \beq
Z^{[s]}(z)&=&\sum_{i\in\ZZ}Z^{[s]}_{\frac{i}{2}}z^{-i}=\sum_{j=1}^sS(y_j,z)-\sum_{j=s+1}^lS(y_j,z),\eeq
for even $s$. Particularly, $Z^{[l]}(z)=-Z^{[0]}(z).$

\begin{remark}
The operators $Z^{[s]}$ are the same as (or isomorphic to) those
defined by Lepowsky and Wilson in \cite{LW1}, \cite{LW2}, where they
are generating operators of vacuum spaces of standard
$A_1^{(1)}$-modules of level $l$. For more details, one can refer to
those two papers.
\end{remark}

\begin{lemma} For any $n\in\frac12\ZZ$, if $v\otimes e^{\gamma_s}$ is a
highest weight vector and $Z_n^{[s]}v\otimes e^{\gamma_s}$ is not
zero, then $Z_n^{[s]}v\otimes e^{\gamma_s}$ is also a highest
weight vector.\end{lemma}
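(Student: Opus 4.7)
The plan is to verify that $Z_n^{[s]}(v\otimes e^{\gamma_s})$ is annihilated by every generator of $\widetilde{\mathcal G}^+$, namely the Chevalley root-vector generators $X_0^\epsilon(\alpha_i)$ for $1\le i\le l$ and $X_1^\epsilon(-\theta)$ (where $\theta=\sqrt{2}\,e_1$ is the highest root of $\dot{\mathcal G}$), together with the positive Heisenberg modes $\alpha_i^\vee(n)$ for $n>0$. Since $v\otimes e^{\gamma_s}$ is already a highest-weight vector, it suffices to prove the commutator identity
\begin{equation*}
[X,Z_m^{[s]}]\,(v\otimes e^{\gamma_s})=0
\end{equation*}
for each such generator $X$.

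I would dispose of the Heisenberg part first: because $Z^{[s]}(z)$ is built from the $S(y_j,z)$ using only the half-integer modes $y_j(\pm(n-\tfrac12))$, and the bracket $[a(m),b(k-\tfrac12)]=m(a\,|\,b)\delta_{m+k-1/2,0}$ vanishes for $m\in\ZZ$, the integer-mode operators $\alpha_i^\vee(n)$ commute with $Z^{[s]}(z)$ on the nose. For the root-vector generators, the factorization
\begin{equation*}
X^\epsilon(\alpha,w)=E^-(\alpha,w)E^+(\alpha,w)F^-(p(\alpha),w)F^+(p(\alpha),w)\otimes Z^\epsilon(\alpha,w)
\end{equation*}
shows that only the $F^{\pm}(p(\alpha),w)$ factors and the translation $e^{\alpha}$ in $Z^\epsilon(\alpha,w)$ can fail to commute with $S(y_j,z)$. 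A standard exponential rearrangement then writes $X^\epsilon(\alpha,w)\,S(y_j,z)$ as $\varphi_j(z/w)\,S(y_j,z)\,X^\epsilon(\alpha,w)$ up to correction terms, where $\varphi_j$ is a scalar rational function whose exponent depends only on $(p(\alpha)\,|\,y_j)$; these inner products are computable from $y_j=2\sum_{k\ge j}\beta_k$ together with $\beta_i=\alpha_i$ for $i<l$ and $\beta_l=-\sum_{i<l}(i/l)\alpha_i$.

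The main obstacle is showing that the signed combination defining $Z^{[s]}(z)$, with its split at index $s$, causes the rational factors $\varphi_j$ to telescope so that $[X^\epsilon(\alpha,w),\,Z^{[s]}(z)]$ either vanishes identically or equals a sum of terms each of which annihilates every highest-weight vector. This is the $C_l^{(1)}$ analogue of the $Z$-algebra vacuum-preservation identity for $A_1^{(1)}$ recalled in Remark~6.2, and the unusual rescaling of $\beta_l$ is precisely what makes the cancellation work for the short-root generators; for the long-root generators $X^\epsilon_0(\alpha_l)$ and $X^\epsilon_1(-\theta)$ one has $p(\alpha)=0$, so the $F^{\pm}$-part is trivial and only the translation piece has to be controlled, which is direct. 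Once the formal commutator identity is in place, extracting the appropriate coefficient in $z,w$ and evaluating at $v\otimes e^{\gamma_s}$ yields the lemma.
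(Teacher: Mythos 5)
Your setup is sound and matches the paper's strategy: reduce to showing that each raising generator $X_0^\epsilon(\alpha_i)$ $(1\le i\le l)$ and $X_1^\epsilon(-\theta)$ kills $Z_n^{[s]}(v\otimes e^{\gamma_s})$, observe that integer Heisenberg modes commute with the half-integer modes in $S(y_j,z)$, and note that for the long roots $p(\alpha)=0$ so those generators commute with $Z_n^{[s]}$ outright. (One small slip: the lattice part $Z^\epsilon(\alpha,w)=w^{(\alpha|\alpha)}e^{\alpha}w^{2\alpha}\epsilon_\alpha$ acts only on $\CC[P]$ while $S(y_j,z)$ acts only on $S(\widehat H^-)$, so the translation $e^{\alpha}$ commutes with $S(y_j,z)$ automatically; the only genuinely noncommuting pieces are the $F^{\pm}(p(\alpha),w)$ factors.)

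The genuine gap is that the step you yourself label ``the main obstacle'' --- showing that the signed combination $\sum_{j\le s}S(y_j,z)-\sum_{j>s}S(y_j,z)$ interacts with $X_0^\epsilon(\alpha_i)$ only through terms that end in $X_0^\epsilon(\alpha_i)$ --- is asserted by analogy with the Lepowsky--Wilson $Z$-algebra picture but never carried out, and it is the entire content of the lemma. The paper does it by an explicit mode computation built on three concrete facts that your sketch does not supply: (i) the factorization identities
\begin{equation*}
S_n(y_i)\pm S_n(y_{i+1})=\sum_{j}S_j(\beta_i)\,S_{n-j}(\beta_i+2\beta_{i+1}+\cdots+2\beta_l),
\end{equation*}
where the sign and the range of $j$ (in $\ZZ$ or $\ZZ+\frac12$) depend on whether $i$ is the split index $s$; (ii) the orthogonality relations $(y_r\,|\,\alpha_i)=0$ for $r\ne i,i{+}1$ and $(\alpha_i\,|\,\beta_i+2\beta_{i+1}+\cdots+2\beta_l)=0$, which make all but one term of $Z_n^{[s]}$ commute with $X_0^\epsilon(\alpha_i)$ and make the second factor in (i) transparent to it; and (iii) the Clifford relations $\{S_n(\alpha_i),S_m(\alpha_i)\}=-2\delta_{m+n,0}$ of Lemma~6.1, used to pull the $S$-modes of $\alpha_i$ coming from $F^{\pm}(\alpha_i,w)$ through the factor $\sum_j S_j(\alpha_i)$ so that the whole expression collapses to a multiple of $(\cdots)X_0^\epsilon(\alpha_i)(v\otimes e^{\gamma_s})=0$. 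Without exhibiting this cancellation (or an equivalent generating-function identity with the correction terms computed), the proof is incomplete; ``the rational factors telescope'' is a description of the desired conclusion, not an argument for it.
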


\begin{proof} At first, we give the proof for $s=0$. For $i<l$, we
have
$$S_n(y_i)+S_n(y_{i+1})=\sum_{j\in\ZZ}S_j(\beta_i)S_{n-j}(\beta_i+2\beta_{i+1}+\cdots+2\beta_l),$$
and $(y_j | \a_i)=0$, $j\not=i,i+1$. Hence
\begin{eqnarray*}
-X_0^\ep(\alpha_i)Z_n^{[0]}(v\otimes1)&=&X_0^\ep(\alpha_i)\left\{\sum_{r\not=i,i+1}S_n(y_r)+\sum_{j\in\ZZ}S_j(\beta_i)S_{n-j}(\beta_i+2\beta_{i+1}+\cdots+2\beta_l)\right\}v\otimes
1\\
&=&Y_{\frac{1}{2}}(\alpha_i)\left\{\sum_{r\not=i,i+1}S_n(y_r)+\sum_{j\in\ZZ}S_j(\beta_i)S_{n-j}(\beta_i+2\beta_{i+1}+\cdots+2\beta_l)\right\}v\otimes
e^{\alpha_i}\\
&=&Y_{\frac12}(\alpha_i)\sum_{r\not=i,i+1}S_n(y_r)v\otimes
e^{\alpha_i}\\
&&+Y_{\frac12}(\alpha_i)\sum_{j\in\ZZ}S_j(\alpha_i)S_{n-j}(\beta_i+2\beta_{i+1}+\cdots+2\beta_l)v\otimes
e^{\alpha_i}\\
&=&\sum_{r\not=i,i+1}S_n(y_r)X_0^\ep(\alpha_i)v\otimes1\\
&&+\sum_{k\in\ZZ}E_k(\alpha_i)S_{\frac12-k}(\alpha_i)\sum_{j\in\ZZ}S_j(\alpha_i)S_{n-j}(\beta_i+2\beta_{i+1}+\cdots+2\beta_l)v\otimes
e^{\alpha_i}\\
&=&-\sum_{j\in\ZZ}S_j(\alpha_i)S_{n-j}(\beta_i+2\beta_{i+1}+\cdots+2\beta_l)X^\ep_0(\alpha_i)v\otimes1=0.
\end{eqnarray*}
Moreover, operators $X_0^\ep(\alpha_l)$ and
$X_1^\ep(-(2\alpha_1+\cdots+2\alpha_{l-1}+\alpha_l))$ commute with
$Z_n^{[0]}$, so $Z_n^{[0]}v\otimes 1$ is still a highest weight
vector.

The proof for $s=l$ is the same as above.

For $Z_n^{[s]}$ with $0<s<l$,
$$S_n(y_i)-S_n(y_{i+1})=\sum_{j\in\ZZ+\frac12}S_j(\beta_i)S_{n-j}(\beta_i+2\beta_{i+1}+\cdots+2\beta_l),$$
then
\begin{eqnarray*}
&&X_0^\ep(\alpha_s)Z_n^{[s]}(v\otimes e^{\gamma_s})\\
&=&X_0^\ep(\alpha_s)\left\{\left(\sum_{r<s}-\sum_{r>s+1}\right)S_n(y_r)+\sum_{j\in\ZZ+\frac12}S_j(\beta_s)S_{n-j}(\beta_s+2\beta_{s+1}+\cdots+2\beta_l)\right\}v\otimes
e^{\gamma_s}\\
&=&Y_1(\alpha_s)\left\{\left(\sum_{r<s}-\sum_{r>s+1}\right)S_n(y_r)+\sum_{j\in\ZZ}S_j(\beta_s)S_{n-j}(\beta_s+2\beta_{s+1}+\cdots+2\beta_l)\right\}v\otimes
e^{\gamma_s+\alpha_i}\\
&=&Y_1(\alpha_i)\left(\sum_{r<s}-\sum_{r>s+1}\right)S_n(y_r)v\otimes
e^{\gamma_s+\alpha_i}\\
&&+Y_1(\alpha_i)\sum_{j\in\ZZ}S_j(\alpha_s)S_{n-j}(\beta_s+2\beta_{s+1}+\cdots+2\beta_l)v\otimes
e^{\gamma_s+\alpha_s}\\
&=&\sum_{r\not=i,i+1}S_n(y_r)X_0^\ep(\alpha_s)v\otimes  e^{\gamma_s}\\
&&+\sum_{k\in\ZZ}E_k(\alpha_i)S_{1-k}(\alpha_i)\sum_{j\in\ZZ+\frac12}S_j(\alpha_s)S_{n-j}(\beta_s+2\beta_{s+1}+\cdots+2\beta_l)v\otimes
e^{\gamma_s+\alpha_s}\\
&=&-\sum_{j\in\ZZ}S_j(\alpha_s)S_{n-j}(\beta_s+2\beta_{s+1}+\cdots+2\beta_l)X^\ep_0(\alpha_s)v\otimes
e^{\gamma_s}=0.
\end{eqnarray*}
For other $X^\ep(\alpha_i)$ and $X_0^\ep(\alpha_l)$,
$X_1^\ep(-(2\alpha_1+\cdots+2\alpha_{l-1}+\alpha_l))$, the proof is
similar to the first case. Then $Z_n^{[s]}v\otimes e^{\gamma_s}$ is also a
highest weight vector.
\end{proof}

For $\lambda=\Lambda_0-\sum_{i=0}^lk_i\a_i$, define
$$\deg \lambda=\sum_{i=0}^lk_i,$$
and
$$V(P)_i=\sum_{\lambda\,: \,\deg \lambda=i}V(P)_\lambda,$$
then
$$V(P)=\sum V(P)_i.$$

The $q$-character $\text{ch}_q$ is a map from $V(P)$ to
$\ZZ[q^{\pm1}]$(to $\ZZ[q^{\pm\frac12}]$ if $l$ is even) defined by
\beq ch_q V(P)&=&\sum\dim V(P)_iq^i.\eeq

Define the highest weight vector space of $V(P)^{[s]}$ as
$\Omega_s\otimes e^{\gamma_s}$. Then we have

\begin{theorem} $\Omega_s$ is generated by
operators $Z^{[s]}_i(i\in\frac12\ZZ_-)$. Moreover, \beq \text{\rm
ch}_q\Omega_s=\prod_{n=1}^{\infty}\frac{(1-q^{l(l+2)n})(1-q^{l[(l+2)n-s-1]})(1-q^{l[(l+2)n-l+s-1]})}{(1-q^{ln})}.\eeq
\end{theorem}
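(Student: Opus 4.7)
The plan is to combine Lemma 6.3 with the Lepowsky--Wilson vacuum-space character for level-$l$ standard $A_1^{(1)}$-modules, and then close the argument by comparing the $q$-character of $V(P)$ computed in two independent ways. Write $W_s$ for the span inside $V(P)$ of all vectors
\[
Z^{[s]}_{i_1}\cdots Z^{[s]}_{i_k}(1\otimes e^{\gamma_s}),\qquad k\geq 0,\;\;i_j\in\tfrac{1}{2}\ZZ_{<0}.
\]

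First, iterating Lemma 6.3 shows that every such vector is a highest weight vector in $V(P)$, so $W_s\subseteq\Omega_s\otimes e^{\gamma_s}$ and hence $\text{\rm ch}_q W_s\leq\text{\rm ch}_q\Omega_s$ coefficient-wise. Next, by the identification in Remark 6.1, the operators $Z^{[s]}(z)$ act as the Lepowsky--Wilson $\ZZ$-algebra generators on the vacuum space of the level-$l$ standard $A_1^{(1)}$-module $L((l-s)\Lambda_0+s\Lambda_1)$. Together with the anticommutation relations of Lemma 6.1, which reproduce exactly the straightening rules used by Lepowsky and Wilson, this identifies $W_s$ with their vacuum space, so by their character formula (\cite{LW1}, \cite{LW2})
\[
\text{\rm ch}_q W_s=\prod_{n=1}^{\infty}\frac{(1-q^{l(l+2)n})(1-q^{l[(l+2)n-s-1]})(1-q^{l[(l+2)n-l+s-1]})}{(1-q^{ln})}.
\]

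To promote $W_s\subseteq\Omega_s$ to equality, I would compute $\text{\rm ch}_q V(P)$ two different ways. On one hand, the bosonic presentation $V(P)=S(\widehat{H}^-)\otimes\CC[P]$ yields a free-field product expression times the theta sum $\sum_{r\in P}q^{-(r|r)/2}$, which after Gauss's identity (4.4) and the quintuple product identity (Lemma 4.2) reshapes into an explicit product/sum form. On the other hand, Theorem 5.1 combined with integrability of each irreducible summand gives
\[
\text{\rm ch}_q V(P)=\sum_{s=0}^l q^{-s/4}\cdot\text{\rm ch}_q\Omega_s\cdot\dim_q L(\Lambda_s),
\]
with $\dim_q L(\Lambda_s)$ supplied by Lemma 4.1. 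A direct $q$-series identity, derivable from the same Jacobi- and quintuple-product tools, shows that the bosonic form of $\text{\rm ch}_q V(P)$ coincides with $\sum_{s=0}^l q^{-s/4}\cdot\text{\rm ch}_q W_s\cdot\dim_q L(\Lambda_s)$. Since $\text{\rm ch}_q\Omega_s\geq\text{\rm ch}_q W_s$ and each $q^{-s/4}\dim_q L(\Lambda_s)$ has non-negative coefficients, the resulting equality
$\sum_s q^{-s/4}(\text{\rm ch}_q\Omega_s-\text{\rm ch}_q W_s)\dim_q L(\Lambda_s)=0$
forces $\text{\rm ch}_q\Omega_s=\text{\rm ch}_q W_s$ termwise for every $s$, hence $\Omega_s=W_s$ and the stated formula follows.

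The main obstacle is the matching $q$-series identity in the last step. It is essentially the principal-specialization denominator formula for $C_l^{(1)}$, enriched by the half-integer-degree contributions coming from the short-root Heisenberg $\widehat{H}_S$, and requires careful tracking of the $q^{-s/4}$ shifts alongside repeated application of Gauss's identity and the quintuple product identity.
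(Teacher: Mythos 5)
Your containment $W_s\subseteq\Omega_s\otimes e^{\gamma_s}$ via Lemma 6.3 and the positivity mechanism at the end are both sound, but the step that is supposed to close the argument --- the assertion that ``a direct $q$-series identity, derivable from the same Jacobi- and quintuple-product tools,'' matches the bosonic form of $\text{ch}_qV(P)$ with $\sum_s q^{-s/4}\,\text{ch}_qW_s\,\dim_qL(\Lambda_s)$ --- is a genuine gap. That identity is precisely the content of Theorems 1.1 and 1.3, i.e.\ the product-sum identities that are the paper's main results; Section 8 \emph{derives} them from Theorem 6.4 together with Lemma 4.1. The Gauss and quintuple product identities only evaluate the individual ingredients ($\kappa_q(l,r)$ and $\dim_qL(\Lambda_s)$); they do not produce the sum over $s$. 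So as written your argument is circular: you need the identity to prove the theorem, while the paper needs the theorem to prove the identity. Unless you supply an independent combinatorial proof of that identity --- which would itself be a substantial result --- the final step fails. A secondary slip: under the degree specialization of Section 6 the shift $\Lambda_s-\frac{s}{4}\delta$ contributes a factor $q^{\frac{(l-s)s}{2}}$ (cf.\ Eq.\ (8.6)), not $q^{-s/4}$, so even the shape of the comparison identity you would need is not the one you wrote.

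The paper closes the inclusion in the opposite, purely representation-theoretic direction: it proves $\Omega_s\otimes e^{\gamma_s}\subseteq W_s$ directly. Lemma 7.2 characterizes the highest weight vectors $v\otimes e^{\gamma_s}$ with $v\in S(\widehat{H_S}^-)$ by explicit annihilation conditions $S_{m-\frac12}(\alpha_i)v\otimes1=0$ (and $S_{m}(\alpha_s)v\otimes1=0$ when $(\alpha_s|\alpha_s)=1$), and Lemmas 7.3--7.6 show, by expanding each $Z^{[t]}_n$ into its $S_j(\alpha_i)$-components and using the anticommutation relations of Lemma 6.1, that any $v$ satisfying these conditions already lies in the span of the $Z^{[s]}_n$-monomials. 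Only after this equality is established does the paper invoke Lepowsky--Wilson and Bressoud (Lemmas 7.8--7.9) for the basis and the character product. Note also that your identification of $\text{ch}_qW_s$ with the Lepowsky--Wilson vacuum character quietly assumes the $Z^{[s]}$-monomials satisfy no relations beyond the straightening rules; that linear independence is exactly what the cited basis result (Lemma 7.8) supplies, so it cannot be taken for granted in your first step either.
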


\section{Proof of Theorem 6.4}

Let
\beq\widehat{H}^-&=&\bigoplus_{n\in\ZZ_-}H_S(n+\frac{1}{2}).\eeq

Theorem 6.4 will be proved by the following lemmas.

\begin{lemma}
$S(\widehat{H_S}^-)\otimes 1$ can be generated by operators $Z^{[s]}_n${\rm (}$n\in\frac12\ZZ, s=0,\cdots,l${\rm )} on $1\otimes1$.
\end{lemma}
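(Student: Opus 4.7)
The strategy is to reduce the claim to the $Z$-operator generation theorem of Lepowsky--Wilson for standard $A_1^{(1)}$-modules at level $l$, indicated in the Remark preceding the lemma. The easy containment follows at once: each $y_j$ lies in $H_S$ and each $S(y_j,z)$ involves only half-integer modes, so every iterated product $Z^{[s_1]}_{n_1}\cdots Z^{[s_k]}_{n_k}\cdot(1\otimes 1)$ already lies in $S(\widehat{H_S}^-)\otimes 1$.

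For the reverse containment, I would argue by matching graded dimensions with respect to the $d$-grading. On the vacuum $1\otimes 1$, the annihilation part of each $S(y_j,z)$ acts trivially, leaving only the creation exponential $\exp\bigl(\dsum_{n\ge 1}\dfrac{y_j(-n+\frac12)}{n-\frac12}z^{2n-1}\bigr)\cdot(1\otimes 1)$, so the subspace $U$ cyclically generated from $1\otimes 1$ by the modes $Z^{[s]}_n$ decomposes according to the allowed sectors $s$; its graded character is (up to notation) the sum over those sectors of the vacuum-space characters computed by Lepowsky--Wilson. On the other hand, $S(\widehat{H_S}^-)\otimes 1$ has its own explicit graded character as a half-integer Heisenberg Fock space over $H_S$. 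I would verify, via a product-sum identity in the spirit of the Jacobi triple and quintuple product identities recalled in Section~4, that these two characters agree; equality $U=S(\widehat{H_S}^-)\otimes 1$ then follows.

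The main obstacle is this dimension-matching product-sum identity. It is closely related in flavor to the main identities of Theorems~1.1 and~1.3, but is needed here as a self-contained input and cannot be invoked circularly. The expected route is a direct application of the quintuple product identity in the manner of Lemma~4.2, combined with the explicit vacuum-space characters from \cite{LW1}, \cite{LW2}. Once this identity is in place, the generation theorem of Lepowsky--Wilson upgrades the dimension match to an equality of subspaces, completing the proof.
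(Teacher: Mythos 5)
Your easy containment is fine, but the reverse containment rests on a false decomposition, and the route you choose misses the mechanism that actually makes the lemma true. The space $U$ generated from $1\otimes 1$ by \emph{all} the modes $Z^{[s]}_n$ ($s=0,\dots,l$) does \emph{not} split as a sum over sectors $s$ of single-sector vacuum spaces: mixed products $Z^{[s_1]}_{n_1}\cdots Z^{[s_k]}_{n_k}$ with distinct $s_i$ produce vectors lying in none of the single-sector subspaces. Indeed, since $Z^{[s]}(z)=\sum_{j\le s}S(y_j,z)-\sum_{j>s}S(y_j,z)$, already the linear span of the $Z^{[s]}_n$ contains every individual $S_n(y_j)$, because $Z^{[s+1]}_n-Z^{[s]}_n=2S_n(y_{s+1})$. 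The single-sector spaces are the much smaller $\Omega_s$ of Theorem~6.4, and $\sum_s\text{ch}_q\Omega_s$ is nowhere near the character of the half-integer-mode Fock space $S(\widehat{H_S}^-)$ over the $(l-1)$-dimensional space $\dot H_S$; so the character identity you would need is not the one you describe, and the one you do describe you explicitly leave unproved, flagging it as ``the main obstacle.'' An argument whose central identity is both misidentified and unestablished has a genuine gap.

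The paper's proof is a short direct argument that avoids characters entirely: (i) difference consecutive $Z^{[s]}$'s to isolate each $2S(y_j,z)$, as above; (ii) observe that the modes $S_n(y_j)$ ($n\in\frac12\ZZ$) generate the Heisenberg creation operators $y_j(m+\frac12)$, and since the $y_j$ span $\dot H_S$, every element of $S(\widehat{H_S}^-)\otimes1$ is reached from $1\otimes1$. If you wish to keep a character-theoretic framing, the identity you must verify is that the algebra generated by all the $S_n(y_j)$ exhausts this bosonic Fock space --- and the only reasonable way to see that is again to recover the Heisenberg generators from the $S$'s, i.e.\ to carry out step (ii). I recommend replacing the dimension-matching scheme by this direct generation argument.
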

\begin{proof} At first, by the definition of operators $Z^{[s]}(z)$,
\begin{eqnarray*}
Z^{[1]}_n-Z^{[0]}_n&=&2S(y_1),\\
Z^{[2]}_n-Z^{[1]}_n&=&2S(y_2),\\
&\cdots&\\
Z^{[l-1]}_n-Z^{[l-2]}_n&=&2S(y_{l-1}),\\
\end{eqnarray*}
moreover, for $0<s<l$ and $m\in\ZZ$, $y_s(m+\frac{1}2)$ can be
generated by operators $S_n(y_s)(n\in\frac12\ZZ)$. So
$S(H_S^-)\otimes 1$ can be generated by the $Z^{[s]}_n$'s.
\end{proof}

\begin{lemma}
Suppose that $v\in S(\widehat{H_S}-)$, then $v\otimes e^{\gamma_s}$
is a highest weight vector if and only if for all positive integers
$m$,
$$S_{m-\frac12}(\alpha_i)v\otimes1=0\ (0<i<l,\, i\not=s),\quad  S_{m}(\alpha_s)v\otimes1=0 \ (\hbox{\rm when}\  (\a_s|\a_s)=1).$$
\end{lemma}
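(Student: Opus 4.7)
The plan is to reduce the highest-weight-vector condition to annihilation by the simple Chevalley generators of $\wg$, and then to convert each such vertex-operator condition into an $S_n(\a_i)$-condition by isolating the half-integer-mode contribution.

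First I would observe that $v \otimes e^{\gamma_s}$ is automatically a weight vector for the Cartan $\dot H \oplus \CC c \oplus \CC d$, and that $a(n)\cdot v = 0$ for every $a \in \dot H$ and every positive integer $n$, because $[a(n), \beta(-m-\frac{1}{2})] = 0$ when $n \in \ZZ_{>0}$ and $m \geq 0$. Hence the highest-weight-vector condition reduces to annihilation by the simple positive Chevalley generators, which under the realization of Proposition 2.3 are $X^\ep_0(\a_i)$ for $1 \leq i \leq l$ together with $X^\ep_1(-\theta)$, where $\theta = 2\a_1 + \cdots + 2\a_{l-1} + \a_l$ is the highest root. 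Applying $X^\ep_n(\a)(v \otimes e^r) = \ep(\a, r) Y_{n + \frac{1}{2}(\a|\a) + (\a|r)}(\a)(v) \otimes e^{\a+r}$ together with $(\a_i|\gamma_s) = \frac{1}{2}(\a_i|\a_i)\delta_{is}$, the required indices $N$ at which $Y_N(\a)(v)$ must vanish are $N = \frac{1}{2}$ for the short simple roots $\a_i$ with $1 \leq i \leq l-1$ and $i \neq s$, $N = 1$ for $i = s$ when $s < l$, and $N$ a strictly positive integer for the long-root cases ($i = l$ and the affine generator).

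Since $v \in S(\widehat{H_S}^-)$ involves only half-integer creators, $E^+(\a, z)\cdot v = v$, so $Y(\a, z) v = E^-(\a, z) F^-(p(\a),z)F^+(p(\a),z) v$. For long roots $p(\a) = 0$, hence $Y(\a, z) v = E^-(\a, z) v$; because the Laurent expansion of $E^-(\a, z)$ contains only non-negative even powers of $z$, $Y_N(\a) v = 0$ automatically for every $N > 0$, disposing of the long-root conditions. For the short simple roots $\a_i$ ($1 \leq i \leq l-1$), $p(\a_i) = \a_i$, and a direct reindexing of exponentials identifies $F^-(\a_i, z) F^+(\a_i, z)$ with $S(\a_i, z)$ of (7.1). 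Expanding Laurent series then gives $Y_N(\a_i)(v) = \sum_{k \geq 0} E^-_k(\a_i)\, S_{N+k}(\a_i)(v)$. Under the tensor decomposition $S(\widehat H^-) \cong S(\bigoplus_{m > 0} H(-m)) \otimes S(\widehat{H_S}^-)$, the factors $E^-_k(\a_i)$ act on the first tensorand while $S_{N+k}(\a_i)(v)$ lives in the second; the elements $\{E^-_k(\a_i) \cdot 1\}_{k \geq 0}$ sit in distinct graded pieces and are all nonzero, hence linearly independent. Therefore $Y_N(\a_i)(v) = 0$ if and only if $S_{N+k}(\a_i) v = 0$ for every $k \geq 0$. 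Substituting $N = \frac{1}{2}$ for $i \neq s$ gives $S_{m - \frac{1}{2}}(\a_i) v = 0$ for all $m \geq 1$, and $N = 1$ for $i = s$ (noting $(\a_s|\a_s) = 1$) gives $S_m(\a_s) v = 0$ for all $m \geq 1$, exactly matching the statement.

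The main obstacle will be the linear-independence step: the expansion $Y_N = \sum_k E^-_k S_{N+k}$ uncouples into independent $S$-conditions thanks to the PBW-type bigrading of $S(\widehat H^-)$ by integer versus half-integer modes. The identification $F^-(\a_i, z) F^+(\a_i, z) = S(\a_i, z)$ is a routine reindexing of exponentials but should be stated explicitly so that the reader can substitute.
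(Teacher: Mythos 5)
Your proposal is correct and follows essentially the same route as the paper's own proof: reduce the highest-weight condition to annihilation by $X^\ep_0(\a_i)$ $(1\le i\le l)$ and $X^\ep_1(-2\a_1-\cdots-2\a_{l-1}-\a_l)$, note that the long-root conditions hold automatically on $S(\widehat{H_S}^-)$, and expand $Y_N(\a_i)=\sum_j E_j(\a_i)S_{N-j}(\a_i)$ with the index shift $N=\frac12$ for $i\neq s$ and $N=1$ for $i=s$. The only difference is that you make explicit the uncoupling step (linear independence of the $E^-_k(\a_i)\cdot 1$ across degrees in the integer-mode tensorand), which the paper leaves implicit when it concludes ``Thus this lemma holds.''
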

\begin{proof}
As we know that $v\otimes e^{\gamma_s}$ is a highest weight vector if and only if
$$X^\epsilon_0(\alpha_i)v\otimes e^{\gamma_s}=X^\epsilon_1(-2\alpha_1-\cdots-2\alpha_{l-1}-\alpha_l)v\otimes e^{\gamma_s}=0, \quad i=1,\cdots,l.$$
For any $v\in S(\widehat{H_S}^-)$, it always holds that $$X^\epsilon_0(\alpha_l)v\otimes e^{\gamma_s}=X^\epsilon_1(-2\alpha_1-\cdots-2\alpha_{l-1}-\alpha_l)v\otimes e^{\gamma_s}=0.$$
Let
$$E(\alpha,z)=E^-(\alpha,z)E^+(\alpha,z)=\sum_{j\in\ZZ}E_j(\alpha)z^{-j},$$
then for  $0<i<l$,
$$X^\epsilon_0(\alpha_i)v\otimes e^{\gamma_s}=\epsilon_{\alpha_i}Y_{\frac12}(\alpha_i)v\otimes e^{\gamma_s+\alpha_i}=\epsilon_{\alpha_i}\sum_{j\in\ZZ}E_{j}(\alpha_i)S_{\frac12-j}(\alpha_i)v\otimes e^{\gamma_s+\alpha_i}$$
for $i\not=s$ and
$$X^\epsilon_0(\alpha_i)v\otimes e^{\gamma_s}=\epsilon_{\alpha_i}Y_{1}(\alpha_i)v\otimes e^{\gamma_s+\alpha_i}=\epsilon_{\alpha_i}\sum_{j\in\ZZ}E_{j}(\alpha_i)S_{1-j}(\alpha_i)v\otimes e^{\gamma_s+\alpha_i}$$
for $i=s$. Thus this lemma holds.
\end{proof}

\begin{lemma}
If $v\in S(\widehat{H_S}^-)$ and for all positive integer $m$,
$$S_{m-\frac12}(\alpha_1)v\otimes1=0,$$
then $v$ belongs to the subspace $W_1$ generated by $Z^{[0]}_{\frac{n}2},Z^{[2]}_{\frac{n}2},\cdots,Z^{[l-1]}_{\frac{n}2},Z^{[l]}_{\frac{n}2}=-Z^{[0]}_{\frac{n}2}(n\in\ZZ)$.
\end{lemma}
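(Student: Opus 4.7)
\medskip

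\noindent\textbf{Proof plan.} My plan is to read the hypothesis as a vacuum condition for the Clifford-type algebra generated by the half-integer $\alpha_1$-modes, and then to combine (a)~a commutator calculation giving the forward inclusion $W_1\cdot(1\otimes 1)\subseteq\mathrm{Ann}$ with (b)~a Lepowsky--Wilson $Z$-algebra argument (reverse inclusion).

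First, I would invoke Lemma~6.1: the relation $\{S_n(\alpha_1),S_m(\alpha_1)\}=-2\delta_{m+n,0}$ makes the half-integer modes $\{S_{m-\frac12}(\alpha_1)\mid m\in\ZZ\}$ generate a Clifford algebra on the $\alpha_1$-summand of $\widehat H_S^-$. Splitting $\dot H_S=\CC\alpha_1\oplus\alpha_1^\perp$ factors $S(\widehat{H_S}^-)$ as a tensor product, and the hypothesis precisely says that $v\otimes 1$ is annihilated by the positive Clifford modes -- a vacuum condition in the $\alpha_1$-direction. The task becomes identifying this vacuum subspace with $W_1\cdot(1\otimes 1)$.

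Second, for the forward inclusion I would compute the OPE $S(\alpha_1,w)S(y_j,z)$ using the standard half-integer vertex-operator contraction, which produces a rational prefactor $\bigl(\frac{w-z}{w+z}\bigr)^{(\alpha_1\mid y_j)}$ times a normal-ordered product. A direct calculation from $\beta_i=\alpha_i$ ($i<l$) and $\beta_l=-\sum_{i<l}(i/l)\alpha_i$ yields $(\alpha_1\mid y_j)=\delta_{j,1}-\delta_{j,2}$, so only $S(y_1,z)$ and $S(y_2,z)$ have nontrivial contraction with $S(\alpha_1,w)$. The reciprocal relation between the two resulting rational factors, combined with the sign pattern in $Z^{[s]}(z)=\sum_{j\le s}S(y_j,z)-\sum_{j>s}S(y_j,z)$, produces the necessary cancellation on $1\otimes 1$ exactly when $y_1$ and $y_2$ lie in the same block -- i.e., for $s\ne 1$. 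This is why the lemma's list of generators excludes $Z^{[1]}$, and it establishes $W_1\cdot(1\otimes 1)\subseteq\mathrm{Ann}$.

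Third, for the reverse inclusion I would appeal to the Lepowsky--Wilson $Z$-algebra theorem (\cite{LW1},\,\cite{LW2}). As noted in Remark~6.2, our $Z^{[s]}_n$ coincide, up to normalization, with their principal-Heisenberg vacuum operators for level-$l$ standard $A_1^{(1)}$-modules, and they prove that these operators generate the full vacuum space acting on the highest-weight vector. Under the identification of $S(\widehat{H_S}^-)$ with the level-$l$ realization for the $A_1^{(1)}$-subalgebra of $\widetilde{\mathcal G}$ attached to $\alpha_1$, the annihilator subspace matches the LW vacuum space. Alternatively, a $q$-character comparison, using the Gauss-type identity of Lemma~4.2 and the expected formula from Theorem~6.4, would verify the equality of graded dimensions directly. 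The main obstacle is precisely this matching: carefully aligning the $\epsilon$-cocycle normalizations, the parity split (even vs.\ odd $s$) in the definition of $Z^{[s]}$, and the action of the $\alpha_1^\perp$-factor of $\widehat H_S^-$ with the LW conventions. Once that identification is secured, the LW theorem delivers the reverse inclusion and closes the proof.
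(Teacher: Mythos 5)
Your key structural observation is the right one --- $(\alpha_1\mid y_j)=\delta_{j,1}-\delta_{j,2}$, so that $Z^{[1]}$ is the only generator carrying half-integer $\alpha_1$-modes (via $S(y_1,z)-S(y_2,z)=\sum_{j\in\ZZ+\frac12}S_j(\alpha_1)S_{n-j}(\beta_1+2\beta_2+\cdots+2\beta_l)$, versus the integer-moded $S(y_1,z)+S(y_2,z)$ appearing in every $Z^{[s]}$ with $s\neq1$). But there is a genuine gap: the inclusion the lemma actually asserts is $\mathrm{Ann}\subseteq W_1\cdot(1\otimes1)$, i.e.\ what you call the \emph{reverse} inclusion, and that is exactly the step you defer to an unexecuted Lepowsky--Wilson identification (which you yourself flag as ``the main obstacle''). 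Your forward inclusion $W_1\cdot(1\otimes1)\subseteq\mathrm{Ann}$ is easy but is not what is being proved. Moreover, the LW vacuum-space theorem does not apply directly here: the annihilator of the single family $\{S_{m-\frac12}(\alpha_1)\}$ inside $S(\widehat{H_S}^-)$ is not the vacuum space of a principal Heisenberg subalgebra in a standard $A_1^{(1)}$-module, and the alternative $q$-character comparison would require the graded dimension of $W_1\cdot(1\otimes1)$, which is not available at this point (the characters of $Z$-generated subspaces are only obtained later, in Lemmas 7.8--7.9, precisely by citing LW and Bressoud).

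The paper closes this gap elementarily, without LW machinery, by first invoking Lemma 7.1: every $v\in S(\widehat{H_S}^-)\otimes1$ is a polynomial in \emph{all} the $Z^{[s]}_n$ applied to $1\otimes1$. Since $(\alpha_1\mid\beta_1+2\beta_2+\cdots+2\beta_l)=0$, such a polynomial can be regrouped as $\sum b_{i_1,\dots,i_r}S_{i_1}(\alpha_1)\cdots S_{i_r}(\alpha_1)\otimes1$ with the $b$'s commuting with $S(\alpha_1)$, and $v\in W_1$ exactly when all the exponents $i_p$ are integers. The Clifford relation $\{S_n(\alpha_1),S_m(\alpha_1)\}=-2\delta_{m+n,0}$ then lets one extract any coefficient $b_{i_1,\dots,i_r}$ by applying the opposite modes, so a nonzero component with a half-integer $\alpha_1$-mode would be detected by some $S_{m-\frac12}(\alpha_1)$ with $m>0$, contradicting the hypothesis. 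If you want to salvage your outline, replace your third step by this extraction argument (which requires Lemma 7.1 as input); as written, the proposal does not establish the implication stated in the lemma.
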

\begin{proof}
Notice that
\begin{eqnarray*}
Z^{[0]}_{\frac{n}2}&=&-\sum_{r\not=1,2}S_{\frac{n}2}(y_r)-\sum_{j\in\ZZ}S_j(\beta_1)S_{\frac{n}2-j}(\beta_1+2\beta_{2}+\cdots+2\beta_l)\\
&=&-\sum_{j\in\ZZ}S_j(\alpha_1)S_{\frac{n}2-j}(\beta_1+2\beta_{2}+\cdots+2\beta_l)+\hbox{\rm
terms commuting with} \; S(\alpha_1).
\end{eqnarray*}
\begin{eqnarray*}
Z^{[1]}_{\frac{n}2}&=&\sum_{j\in\ZZ}S_{j+\frac{1}{2}}(\alpha_1)S_{\frac{n}{2}-j-\frac{1}{2}}(\beta_1+2\beta_{2}+\cdots+2\beta_l)+\hbox{\rm
terms commuting with} \; S(\alpha_1).
\end{eqnarray*}
and
\begin{eqnarray*}
Z^{[s]}_{\frac{n}2}&=&\sum_{j\in\ZZ}S_j(\alpha_1)S_{\frac{n}2-j}(\beta_1+2\beta_{2}+\cdots+2\beta_l)+\hbox{\rm
terms commuting with} \; S(\alpha_1),
\end{eqnarray*}
for $s\geq 2$. Since $(\alpha_1|\beta_1+2\beta_{2}+\cdots+2\beta_l)=0$, a homogeneous non-zero vector
$$v=v\otimes1=\sum a_{j_1,\cdots,j_k}Z^{[s_1]}_{j_1}\cdots Z^{[s_k]}_{j_k}\otimes1$$
can be written as
$$\sum b_{i_1,\cdots,i_r}S_{i_1}(\alpha_1)\cdots S_{i_r}(\alpha_1)\otimes1, ({i_1}<\cdots<{i_r}\leq 0)$$
where $b_{i_1,\cdots,i_r}$ is a non-zero polynomial commuting with $S(\alpha_1)$. Then $v\in W_1$ if and only if $i_1,\cdots,i_r\in\ZZ$ for any $b_{i_1,\cdots,i_r}$.
It is easy to show that if $b_{i_1,\cdots,i_r}\otimes1\not=0,$ then
$$S_{-j_1}(\alpha_1)\cdots S_{-j_r}(\alpha_1)v=\hbox{\rm a scalar of \;}b_{j_1,\cdots,j_r}\otimes1\not=0.$$
Condition $S_{m-\frac12}(\alpha_1)v\otimes1=0$ implies all $i_1,\cdots,i_r\in\ZZ$, so $v\in W_1$.
\end{proof}

A similar argument  shows the following two lemmas.
\begin{lemma}
If $v\in S(\widehat{H_S}^-)$ and for all positive integer $m$,
$$S_{m-\frac12}(\alpha_1)v\otimes1=0, S_{m-\frac12}(\alpha_2)v\otimes1=0,$$
then $v$ belongs to the subspace generated by $Z^{[0]}_{\frac{n}2},Z^{[3]}_{\frac{n}2},\cdots,Z^{[l-1]}_{\frac{n}2},Z^{[l]}_{\frac{n}2}=-Z^{[0]}_{\frac{n}2}$.
\end{lemma}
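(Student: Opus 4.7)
The plan is to adapt the argument of Lemma 7.3, now using the additional hypothesis on $\alpha_2$ to eliminate the generator $Z^{[2]}_{n/2}$ from the subspace produced by that lemma. First, since the hypothesis $S_{m-\frac12}(\alpha_1)v\otimes 1=0$ holds for all positive integer $m$, Lemma 7.3 itself places $v$ in the subspace $W_1$ generated by those $Z^{[s]}_{n/2}$ with $s\in\{0,2,3,\ldots,l-1,l\}$. It therefore suffices to show that the hypothesis $S_{m-\frac12}(\alpha_2)v\otimes 1=0$ forces every $Z^{[2]}_{n/2}$ out of any expression for $v$.

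To this end I analyse the $S(\alpha_2)$-content of each $Z^{[s]}_{n/2}$. A direct inner product computation gives $(\alpha_2\,|\,y_j)=0$ for all $j\neq 2,3$ and $(\alpha_2\,|\,y_2)=1=-(\alpha_2\,|\,y_3)$, so among the summands of $Z^{[s]}(z)$ only $S(y_2,z)$ and $S(y_3,z)$ interact with $S(\alpha_2,z)$. Using $y_2-y_3=2\beta_2=2\alpha_2$ together with the orthogonality $(\alpha_2\,|\,\beta_2+2\beta_3+\cdots+2\beta_l)=0$, the same manipulation used in the proof of Lemma 7.3 yields
\[
S_n(y_2)+S_n(y_3)=\sum_{j\in\ZZ}S_j(\alpha_2)\,S_{n-j}\bigl(\beta_2+2\beta_3+\cdots+2\beta_l\bigr),
\]
\[
S_n(y_2)-S_n(y_3)=\sum_{j\in\ZZ+\frac12}S_j(\alpha_2)\,S_{n-j}\bigl(\beta_2+2\beta_3+\cdots+2\beta_l\bigr),
\]
whose second-factor modes commute with $S(\alpha_2)$. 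From $Z^{[s]}(z)=\sum_{j\leq s}S(y_j,z)-\sum_{j>s}S(y_j,z)$ one reads off that the coefficients of $S(y_2)$ and $S(y_3)$ have the same sign precisely for $s\in\{0,1,3,4,\ldots,l\}$ and opposite signs only at $s=2$. Hence every $Z^{[s]}_{n/2}$ with $s\neq 2$ contributes only integer-indexed $S(\alpha_2)$-modes, while $Z^{[2]}_{n/2}$ is the unique generator producing half-integer-indexed ones.

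With this in hand, the reorganisation at the end of the proof of Lemma 7.3 can be run verbatim with $\alpha_2$ in place of $\alpha_1$: any $v\in W_1$ may be rewritten as
\[
v\otimes 1=\sum_{i_1<\cdots<i_r\leq 0}b_{i_1,\ldots,i_r}\,S_{i_1}(\alpha_2)\cdots S_{i_r}(\alpha_2)\otimes 1,
\]
where each $b_{i_1,\ldots,i_r}$ commutes with $S(\alpha_2)$ and a half-integer $i_t\in\ZZ+\frac12$ appears exactly when the corresponding monomial involves a $Z^{[2]}$-factor. Applying $S_{-i_1}(\alpha_2)\cdots S_{-i_r}(\alpha_2)$ and invoking the anticommutator $\{S_n(\alpha_2),S_m(\alpha_2)\}=-2\delta_{m+n,0}$ of Lemma 6.1 recovers each $b_{i_1,\ldots,i_r}\otimes 1$ up to a nonzero scalar, so the hypothesis $S_{m-\frac12}(\alpha_2)v\otimes 1=0$ forces every term containing a $Z^{[2]}$-factor to vanish. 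Thus $v$ lies in the subspace generated by $\{Z^{[0]}_{n/2},Z^{[3]}_{n/2},\ldots,Z^{[l-1]}_{n/2},Z^{[l]}_{n/2}\}$, as claimed. The main technical obstacle is the bookkeeping in the preceding paragraph: inside a product $Z^{[s_1]}_{j_1}\cdots Z^{[s_k]}_{j_k}$ one must push the $S(\alpha_2)$-modes past the various $S(y_j)$ factors without producing spurious contributions, and it is precisely the relations $(\alpha_2\,|\,y_j)=0$ for $j\neq 2,3$ together with $(\alpha_2\,|\,\beta_2+2\beta_3+\cdots+2\beta_l)=0$ that make this separation clean.
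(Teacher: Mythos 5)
Your proposal is correct and follows exactly the route the paper intends: the paper gives no separate proof, stating only that ``a similar argument'' to Lemma 7.3 applies, and your write-up is a faithful instantiation of that argument with $\alpha_2$ in place of $\alpha_1$ (including the verification that $(\alpha_2\,|\,y_j)=0$ for $j\neq 2,3$ and that only $Z^{[2]}$ carries the combination $S(y_2)-S(y_3)$, hence the half-integer $S(\alpha_2)$-modes). The reorganisation and extraction step is carried out at the same level of detail as the paper's own proof of Lemma 7.3, so nothing is missing relative to the source.
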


\begin{lemma}
If $v\in S(\widehat{H_S}^-)$ and for all positive integer $m$ and $1<i<l$,
$$S_{m-\frac12}(\alpha_i)v\otimes1=0,$$
then $v$ belongs to the subspace generated by $Z^{[0]}_n$.
\end{lemma}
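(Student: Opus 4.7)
The plan is to iterate the elimination argument used in the proofs of Lemmas 7.3 and 7.4, removing one generator $Z^{[i]}_{n/2}$ at a time as each condition $S_{m-\frac12}(\alpha_i)v\otimes 1=0$ is imposed. The underlying combinatorial mechanism is the same as in Lemma 7.3: using $y_i-y_{i+1}=2\beta_i=2\alpha_i$ and $y_i+y_{i+1}=2(\beta_i+2\beta_{i+1}+\cdots+2\beta_l)$, the vertex-operator identity
$$S_n(y_i)+S_n(y_{i+1})=\sum_{j\in\ZZ}S_j(\alpha_i)\,S_{n-j}(\beta_i+2\beta_{i+1}+\cdots+2\beta_l)$$
holds with integer-indexed summation, while $S_n(y_i)-S_n(y_{i+1})$ gives the analogous expansion over $j\in\ZZ+\frac12$. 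Substituting into $Z^{[s]}_{n/2}=\sum_{j=1}^s S_{n/2}(y_j)-\sum_{j=s+1}^l S_{n/2}(y_j)$ and tracking which $s$ contains the sign pattern $+S(y_i)-S(y_{i+1})$ shows that, modulo operators commuting with $S(\alpha_i)$, only $Z^{[i]}_{n/2}$ feeds half-integer modes of $S(\alpha_i)$ into a given vector, while all other $Z^{[s]}_{n/2}$ feed integer modes only.

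Having fixed $i$, I would then carry out the same normal-form argument as in Lemma 7.3: write the given $v\in S(\widehat{H_S}^-)$ as
$$v=\sum b_{j_1,\dots,j_r}\,S_{j_1}(\alpha_i)\cdots S_{j_r}(\alpha_i)\otimes 1,\qquad j_1<\cdots<j_r\le 0,$$
with each $b_{j_1,\dots,j_r}$ a polynomial in the generators that commute with $S(\alpha_i)$. By Lemma 7.1 and the anticommutation relation of Lemma 6.1, applying the dual operator $S_{-j_1}(\alpha_i)\cdots S_{-j_r}(\alpha_i)$ extracts $b_{j_1,\dots,j_r}\otimes 1$ up to a nonzero scalar. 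The hypothesis $S_{m-\frac12}(\alpha_i)v\otimes 1=0$ for all $m>0$ therefore forces every $j_k$ to be an integer, which is exactly the statement that the $Z^{[i]}_{n/2}$-contribution to $v$ vanishes.

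Iterating this elimination for $i=2,3,\dots,l-1$ (together with the built-in relation $Z^{[l]}_{n/2}=-Z^{[0]}_{n/2}$ and, via the chain $Z^{[s+1]}_{n/2}-Z^{[s]}_{n/2}=2S_{n/2}(y_{s+1})$, the interlocking of successive $Z^{[s]}$ that was already exploited in Lemmas 7.3 and 7.4) reduces the admissible generators until only $Z^{[0]}_{n/2}$ survives. Thus any $v$ satisfying all the imposed annihilation conditions lies in the subspace generated by the $Z^{[0]}_{n/2}$'s acting on $1\otimes 1$.

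The main obstacle, and the only place the argument needs care beyond Lemmas 7.3 and 7.4, is the compatibility of the successive steps: after eliminating $Z^{[i]}$ via the $\alpha_i$-condition one must verify that the rewrite of the remaining generators in terms of $S(\alpha_{i+1})$-modes still respects the earlier $\alpha_j$-conditions ($j<i$). This follows because the commuting-part operators $b_{j_1,\dots,j_r}$ can be chosen inside the subalgebra orthogonal to $\alpha_i$, which still contains the operators $S(\alpha_j)$ for $j\ne i,i\pm 1$; for the adjacent indices $j=i\pm 1$ one uses the standard $A_1$-type vertex commutation calculus (as in \cite{LW1,LW2}) to move $S(\alpha_j)$-modes past $S(\alpha_i)$-modes, producing only integer-indexed $S(\alpha_j)$-modes on the integer-indexed part. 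Once this compatibility is established, the induction on $i$ goes through mechanically.
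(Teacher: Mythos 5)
Your proposal matches the paper's approach: the paper gives no separate proof of this lemma, saying only that ``a similar argument'' to Lemma 7.3 applies, and your iterated elimination --- using the sign pattern in $Z^{[s]}=\sum_{j\le s}S(y_j)-\sum_{j>s}S(y_j)$ to show that only $Z^{[i]}$ feeds half-integer $S(\alpha_i)$-modes, then extracting coefficients with the dual operators $S_{-j_1}(\alpha_i)\cdots S_{-j_r}(\alpha_i)$ --- is precisely that argument spelled out, including the one genuine subtlety (compatibility of the successive eliminations) that the paper leaves implicit. (Note that the hypothesis as printed reads $1<i<l$, which omits the $\alpha_1$-condition needed to kill $Z^{[1]}$; you correctly read it as including $i=1$, consistent with Lemmas 7.2--7.4.)
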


Similarly to the proof for $s=0$ above, for general $s$, we have

\begin{lemma}
Suppose that $v\in S(\widehat{H_S}^-)$ and $0<s<l$. If
$$S_{m-\frac12}(\alpha_i)v\otimes1=0 (0<i<l, i\not=s),\quad  S_{m}(\alpha_s)v\otimes1=0(\hbox{\rm when} (\a_s|\a_s)=1),$$
 for all positive integer $m$, then $v$ belongs to the subspace generated by $Z^{[s]}_n$.
\end{lemma}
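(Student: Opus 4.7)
The plan is to mirror the argument used for $s=0$ in Lemmas 7.3--7.5 almost verbatim, the only structural novelty being that in $Z^{[s]}(z)=\sum_{j=1}^{s}S(y_j,z)-\sum_{j=s+1}^{l}S(y_j,z)$ the sign flip now occurs at the $s$-th slot rather than at the $0$-th. Because $(\alpha_i\mid y_j)=0$ for $j\neq i,i+1$, the only part of $Z^{[s]}_{n/2}$ not commuting with $S(\alpha_i,z)$ comes from the $y_i,y_{i+1}$ pair, and it is governed by the contraction identities $S_n(y_i)+S_n(y_{i+1})=\sum_{j\in\ZZ}S_j(\beta_i)S_{n-j}(\beta_i+2\beta_{i+1}+\cdots+2\beta_l)$ or $S_n(y_i)-S_n(y_{i+1})=\sum_{j\in\ZZ+\frac12}S_j(\beta_i)S_{n-j}(\beta_i+2\beta_{i+1}+\cdots+2\beta_l)$ according to whether the two signs inside $Z^{[s]}$ agree or disagree.

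First I would apply these identities to observe that for $0<i<l$ with $i\neq s$ the signs of $y_i$ and $y_{i+1}$ in $Z^{[s]}$ agree, so $Z^{[s]}_{n/2}$ populates the integer-indexed $S(\alpha_i)$ slot; for $i=s$ the signs disagree, so $Z^{[s]}_{n/2}$ populates the half-integer-indexed $S(\alpha_s)$ slot. Consequently, any vector in the subspace obtained by applying products of $Z^{[s]}_n$'s to $1\otimes 1$ is, when expanded in the Clifford-type basis of Lemma 7.1, a polynomial in $S_n(\alpha_i)$ with $n\in\ZZ_{\leq 0}$ for $i\neq s$ and $n\in\ZZ_{\leq 0}+\frac12$ for $i=s$.

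Next I would run the Clifford-decomposition argument of Lemma 7.3 separately along each $\alpha_j$ ($0<j<l$). Writing $v\otimes 1=\sum b_{\underline{i}}\,S_{i_1}(\alpha_j)\cdots S_{i_r}(\alpha_j)\otimes 1$ with $b_{\underline{i}}$ commuting with $S(\alpha_j,z)$, the anticommutation $\{S_n(\alpha_j),S_{-n}(\alpha_j)\}=-2$ of Lemma 7.1 makes $S_{-i_1}(\alpha_j)\cdots S_{-i_r}(\alpha_j)\cdot v$ a nonzero scalar multiple of $b_{i_1,\ldots,i_r}\otimes 1$. Hence $S_{m-\frac12}(\alpha_j)v\otimes 1=0$ for $j\neq s$ forces each $i_k\in\ZZ$, while $S_m(\alpha_s)v\otimes 1=0$ forces each index in the $\alpha_s$ slot to lie in $\ZZ+\frac12$. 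Combining over $j$ places $v$ precisely in the subspace identified in the previous paragraph.

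The main obstacle is coordinating the single-slot Clifford decomposition along different $\alpha_j$'s simultaneously, since the operator families for distinct $j$ do not commute and the normal-ordering conventions interact nontrivially. This is handled exactly as in the passage from Lemma 7.3 to Lemma 7.5: the single-$\alpha_j$ argument is applied iteratively, using that each step only rewrites $v$ in a basis compatible with the next $\alpha_{j+1}$-decomposition, and that the hypotheses of the lemma are invariant under the relevant reorderings. No new difficulty arises beyond that of the $s=0$ case.
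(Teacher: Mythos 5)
Your proposal follows essentially the same route as the paper, which itself only sketches this lemma by reference to the $s=0$ chain (Lemmas 7.3--7.5): you correctly identify that the sign flip in $Z^{[s]}$ at the $s$-th slot makes the $\alpha_s$-contraction half-integer-indexed and all other $\alpha_i$-contractions integer-indexed, and then run the same slot-by-slot Clifford decomposition using $\{S_n(\alpha_j),S_{-n}(\alpha_j)\}=-2$ to translate the vanishing hypotheses into parity constraints on the indices. This matches the paper's intended argument, at the same level of detail.
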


\begin{lemma}
For any $0\leq s\leq l$, the element $1\otimes e^{\gamma_s}$ is a highest weight
vector.
\end{lemma}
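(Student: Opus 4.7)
The plan is to invoke Lemma 7.2 with $v = 1 \in S(\widehat{H_S}^-)$. By that lemma, the assertion that $1 \otimes e^{\gamma_s}$ is a highest weight vector reduces, for every positive integer $m$, to the two vanishing statements
\[
S_{m - \frac{1}{2}}(\alpha_i) \cdot (1 \otimes 1) = 0 \quad (0 < i < l,\; i \neq s), \qquad S_m(\alpha_s) \cdot (1 \otimes 1) = 0 \ \text{ when } (\alpha_s \mid \alpha_s) = 1.
\]
With the Laurent expansion $S(\alpha, z) = \sum_n S_n(\alpha) z^{-2n}$ fixed in the preceding section, these are the coefficients of $z^{-2m+1}$ and $z^{-2m}$ respectively, both strictly negative powers of $z$ for $m \geq 1$. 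So it suffices to show that, for any $\alpha$, the series $S(\alpha, z) \cdot (1 \otimes 1)$ is supported in non-negative powers of $z$.

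For this, I would exploit the defining factorization
\[
S(\alpha, z) = \exp\Bigl( \sum_{n > 0} \tfrac{\alpha(-n + 1/2)}{n - 1/2} z^{2n-1} \Bigr) \cdot \exp\Bigl( -\sum_{n > 0} \tfrac{\alpha(n - 1/2)}{n - 1/2} z^{-2n+1} \Bigr).
\]
The right-hand exponential involves only the annihilation modes $\alpha(n - 1/2)$ with $n > 0$, which, by the $\widehat{H}$-action defined in Section 3.1, annihilate the vacuum $1 \otimes 1$; hence that factor acts as the identity on $1 \otimes 1$. The remaining exponent lies in $z \cdot (\text{End}\,V(P))[[z^2]]$, with its creation modes $\alpha(-n+1/2)$ mutually commuting (no two indices of the form $-n+1/2$, $-m+1/2$ with $n,m \geq 1$ can sum to zero). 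Consequently this exponential expands term-by-term into a formal power series in $z$ supported in non-negative integer powers only, so every coefficient of a strictly negative power of $z$ vanishes. Reading off the coefficients of $z^{-2m+1}$ and $z^{-2m}$ then delivers both required vanishings in Lemma 7.2.

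The argument is uniform in $s \in \{0, 1, \ldots, l\}$: when $s = 0$ or $s = l$ only the first family of vanishings applies (since $(\alpha_l\mid\alpha_l) = 2$), and when $0 < s < l$ the second applies with $\alpha_s$ a short root; both are handled by the same power-series observation. The conditions $X_0^\epsilon(\alpha_l) v \otimes e^{\gamma_s} = 0$ and $X_1^\epsilon(-2\alpha_1 - \cdots - \alpha_l) v \otimes e^{\gamma_s} = 0$ were already observed in the proof of Lemma 7.2 to hold for any $v \in S(\widehat{H_S}^-)$, so they need not be revisited. I do not expect a real obstacle: the only care required is in matching the Laurent indexing $S(\alpha, z) = \sum S_n(\alpha) z^{-2n}$ to the half-integer versus integer conditions of Lemma 7.2, after which the claim reduces to the elementary statement that the exponential of an operator valued series with strictly positive $z$-support has no negative powers of $z$.
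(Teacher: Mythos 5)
The paper states this lemma without proof, and your argument supplies the natural one correctly. Reducing to Lemma 7.2 with $v=1$ and observing that $S(\alpha,z)\cdot(1\otimes 1)$ carries no strictly negative powers of $z$ --- the annihilation factor of $S(\alpha,z)$ fixes the vacuum since the modes $\alpha(n-\frac12)$, $n>0$, kill $1$, while the creation factor contributes only nonnegative powers --- vanishes exactly the coefficients $S_{m-\frac12}(\alpha_i)$ and $S_m(\alpha_s)$ (of $z^{-2m+1}$ and $z^{-2m}$, $m\geq 1$) that Lemma 7.2 requires; your bookkeeping of which condition applies for $s=0$, $s=l$, and $0<s<l$ is also right. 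This is evidently the argument the authors intended and left implicit.
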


\begin{lemma} For odd $l\geq3$,
$\Omega_{s}$ has basis
$$
\left\{Z^{[s]}_{n_1}\cdots
Z^{[s]}_{n_k}\otimes1 \mid n_p\in\frac{1}{2}\ZZ_-, n_{p}\leq
n_{p+1}, n_{p}\leq n_{p+r}-1, n_{k-\sigma(s)}\leq-1\right\}.
$$
For
even $l\geq2$, $\Omega_{s}$ has basis
$$\left\{Z^{[s]}_{n_1}\cdots
Z^{[s]}_{n_k}\otimes1 \mid n_p\in\frac{1}{2}\ZZ_-,
n_{p}-n_{p+r}<-1\Rightarrow\sum_{i=0}^rn_{p+i}\in\ZZ, n_{p}\leq
n_{p+r}-1, n_{k-\sigma(i)}\leq-1, \; \right\},$$here
$r=\frac{l-1}{2}$ if $l$ odd and $r=\frac{l}2$ if $l$ even.
$\sigma(s)=s$ for $s\leq r$, otherwise, $\sigma(s)=r+1-s$.
\end{lemma}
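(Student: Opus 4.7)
The plan is to combine a spanning argument derived from operator relations among the $Z^{[s]}_n$ with a linear-independence argument obtained by comparing the generating function of the admissible set against the $q$-character formula of Theorem 6.4.

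First I would derive the key algebraic relations among products of $Z^{[s]}_n$. Because $Z^{[s]}(z)$ is a signed sum of the operators $S(y_j,z)$ for $j=1,\dots,l$, and each pair $S(y_i,z)S(y_j,w)$ has a controlled short-distance expansion (the Z-algebra framework of Lepowsky-Wilson cited in Remark 6.2 applies essentially verbatim, since each $y_j$ has squared length $2$ and the inner products $(y_i\,|\,y_j)$ are explicitly computable), one obtains Jacobi-style identities relating mode products $Z^{[s]}_m Z^{[s]}_n$ to sums with larger index separations plus lower-order contributions. Iterating this as a straightening rule on any monomial $Z^{[s]}_{n_1}\cdots Z^{[s]}_{n_k}\otimes 1$, whenever some pair $(n_p,n_{p+r})$ violates $n_p\leq n_{p+r}-1$, reduces it to a $\CC$-linear combination of monomials satisfying the gap condition. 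The boundary constraint $n_{k-\sigma(s)}\leq -1$ comes from analyzing the action of modes with indices close to $0$ on the highest-weight vector $1\otimes e^{\gamma_s}$: the normal-ordering prescription, combined with the specific shape of $\gamma_s$ and the parity $\sigma(s)$, forces vanishing of any monomial whose rightmost cluster has indices that are too large.

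For linear independence I would compute the generating function of the conjectured admissible set, graded by the degree function defined in Section 3.6, and check that it equals the product formula stated in Theorem 6.4. That target expression factors via the Jacobi triple product identity into a theta-type numerator over $\prod_n(1-q^{ln})$; the admissible-partition side, organized as a sum over "clusters" of $r+1$ consecutive indices with the prescribed gap and boundary constraints, should assemble into exactly the same product. Since the spanning has already been secured, any match of generating functions forces linear independence.

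The main obstacle is the even-$l$ case with the extra parity condition $n_p-n_{p+r}<-1\Rightarrow\sum_{i=0}^r n_{p+i}\in\ZZ$. This integrality originates in the half-integer versus integer mode dichotomy inside each $S(y_j,z)$, specifically from the $\beta_l$-component of $y_j$ (which is orthogonal to $\a_1,\dots,\a_{l-1}$ and therefore behaves differently in the inner products controlling the short-distance expansions). Carrying out the straightening while tracking which half-integer modes can coexist inside a tight block of $r+1$ indices, and proving that only parity-aligned combinations survive after reduction, is the delicate bookkeeping step. Once it is in place, the $q$-character identification closes the argument uniformly for both parities of $l$.
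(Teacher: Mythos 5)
The paper itself offers no proof of this lemma: it defers entirely to Lepowsky--Wilson (\cite{LW1}, Theorem 10.4; \cite{LW2}, Section 14) and Bressoud (\cite{Br}, Section 3), on the strength of Remark 6.2 identifying the $Z^{[s]}_n$ with the Lepowsky--Wilson $Z$-operators for level-$l$ standard $A_1^{(1)}$-modules. Your outline reconstructs that same machinery in spirit, but as a self-contained argument it has two genuine gaps. First, the linear-independence step is circular as stated: you propose to match the generating function of the admissible set against ``the $q$-character formula of Theorem 6.4,'' yet in the paper's own logical order that formula is a \emph{consequence} of Lemmas 7.8 and 7.9 (``Lemmas 7.1---7.9 prove Theorem 6.4''). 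For the counting argument to close you need an independent determination of $\text{ch}_q\Omega_s$ --- for instance from the principally specialized Weyl--Kac character formula for the level-$l$ standard $A_1^{(1)}$-module underlying Remark 6.2, or from $\text{ch}_qV(P)$ together with $\dim_qL(\Lambda_s)$ from Lemma 4.1 and the decomposition $V(P)=\sum_s\Omega_s\otimes L(\Lambda_s-\frac{s}{4}\delta)$ --- and you do not supply one. Second, the claim that the sum over difference-condition partitions ``should assemble into exactly the same product'' via the Jacobi triple product identity is precisely the Andrews--Gordon--Bressoud identity; that is a substantial theorem (it is why \cite{Br} is cited), not a routine theta-function manipulation. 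Either it must be invoked explicitly, or you must follow the Lepowsky--Wilson route in which linear independence is proved directly and the combinatorial identity comes out as a corollary rather than being an input.

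A smaller but real issue: the $y_j$ do not have squared length $2$ in the paper's normalization; a direct computation gives $(y_i\,|\,y_i)=2-\frac{2}{l}$ and $(y_i\,|\,y_j)=-\frac{2}{l}$ for $i\neq j$. These are exactly the inner products that make the $Z^{[s]}$ satisfy the \emph{level-$l$} generalized commutation relations, so your straightening rules (the gap condition involving $r$ consecutive indices, the boundary condition $n_{k-\sigma(s)}\leq-1$, and especially the even-$l$ integrality constraint you rightly flag as the delicate point) all depend on getting these constants right; they cannot be read off from a ``squared length $2$'' short-distance expansion. None of this is unfixable --- it is the content of the cited references --- but as written the proposal neither carries out these steps nor correctly identifies where the non-circular input must come from.
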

\begin{lemma}
For any $0\leq s\leq l$,
$$\text{\rm ch}_q\Omega_{s}=\prod_{n=1}^{\infty}\frac{(1-q^{l(l+2)n})(1-q^{l[(l+2)n-s-1]})(1-q^{l[(l+2)n-l+s-1]})}{(1-q^{ln})}. $$
\end{lemma}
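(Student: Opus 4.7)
The plan is to read $\text{ch}_q\Omega_s$ off the monomial basis in Lemma~7.8 and then apply a classical Andrews--Gordon partition identity (or its G\"ollnitz--Gordon--Bressoud refinement in the even $l$ case) to rewrite the sum as the stated product. Heuristically, as Remark~7.2 notes, the operators $Z^{[s]}_n$ are Lepowsky--Wilson generators whose action on $1\otimes e^{\gamma_s}$ reproduces the vacuum subspace of the standard $A_1^{(1)}$-module $L(\Lambda_s)$ of level $l$, whose vacuum character is classically known.

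First, I would fix the $q$-weight attached to a basis monomial $v=Z^{[s]}_{n_1}\cdots Z^{[s]}_{n_k}\otimes e^{\gamma_s}$. Using Lemma~5.1 and the $d$-grading of Section~3.6 together with the definition of $\text{ch}_q$ in Section~7, one checks that $d\cdot v=(\sum_jn_j-\tfrac{s}{4})\,v$, and writing the corresponding weight as $\Lambda_0-\sum k_i\alpha_i$ yields $\sum_i k_i=\tfrac{s(s-l)}{2}-2l\sum_j n_j$. After the natural normalization sending the vacuum $1\otimes e^{\gamma_s}$ to $q^0$, each $v$ contributes $q^{-2l\sum n_j}$. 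Substituting $m_j=-2n_j\in\ZZ_{>0}$ turns $\text{ch}_q\Omega_s$ into $\sum q^{l(m_1+\cdots+m_k)}$ ranging over the tuples produced by Lemma~7.8.

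For odd $l$, these tuples correspond bijectively to partitions with non-increasing positive-integer parts $m_1\ge\cdots\ge m_k$ satisfying $m_p-m_{p+r}\ge 2$ at distance $r=\tfrac{l-1}{2}$ and containing at most $\sigma(s)$ parts equal to $1$; this is exactly the partition regime of the Andrews--Gordon identity with parameters $k_{AG}=r+1=\tfrac{l+1}{2}$ (so $2k_{AG}+1=l+2$) and $a=\sigma(s)+1$. Since $\sigma(s)=\sigma(l-s)$ and $-(l-s+1)\equiv s+1\pmod{l+2}$, the excluded residue classes $\pm(\sigma(s)+1)$ reduce to $\pm(s+1)\pmod{l+2}$, so Andrews--Gordon yields
\[
\text{ch}_q\Omega_s=\prod_{\substack{n\ge 1\\ n\not\equiv 0,\pm(s+1)\,(\bmod\,l+2)}}\frac{1}{1-q^{ln}},
\]
and the Jacobi triple product rewrites the right-hand side as the product in the statement.

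The main obstacle will be the even $l$ case, where Lemma~7.8 carries the additional parity constraint $\sum_{i=0}^{r} n_{p+i}\in\ZZ$ whenever $n_p-n_{p+r}<-1$. This couples integer and half-integer modes and replaces Andrews--Gordon by a G\"ollnitz--Gordon--Bressoud type identity; I would handle it by splitting the generating function according to the parity of the number of half-integer indices $n_j$ that appear, applying the appropriate identity to each piece, and recombining. A useful sanity check throughout is to compare the resulting character with $\dim_q L(\Lambda_s)$ from Lemma~4.1 via the decomposition in Theorem~5.4.
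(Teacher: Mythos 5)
Your proposal is correct and follows essentially the same route as the paper, which proves Lemmas 7.8 and 7.9 simply by citing Lepowsky--Wilson (\cite{LW1} Theorem 10.4, \cite{LW2} Section 14) and Bressoud (\cite{Br} Section 3); translating the monomial basis of Lemma 7.8 into Andrews--Gordon (odd $l$) and Bressoud even-modulus (even $l$) partition conditions is exactly what those references supply. One cosmetic point: passing from $\prod_{n\not\equiv 0,\pm(s+1)\,(\mathrm{mod}\ l+2)}(1-q^{ln})^{-1}$ to the stated three-factor form needs only cancellation of residue classes, not the Jacobi triple product.
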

For Lemmas 7.8 and 7.9, one can refer (\cite{LW1}, Theorem 10.4), (\cite{LW2}, Section 14) and
(\cite{Br}, Section 3).

Lemmas 7.1---7.9 prove Theorem 6.4.

\section{Product-sum identities}
Since \beq V(P)&=&\sum_{s=0}^l\Omega_s\otimes
L(\Lambda_s-\frac{s}{4}\delta),\eeq we have the specialized
character \beq \text{ch}_qV(P)&=&\sum_{s=0}^l \text{ch}_q\Omega_s
\;\text{ch}_q L(\Lambda_s-\frac{s}{4}\delta),\eeq the L.H.S. is \beq
\frac{\sum_{n_1,\cdots,n_l\in\ZZ}q^{\frac12(ln_1^2-n_1+ln_2^2-3n_2\cdots+ln_l^2-(2l-1)n_l)}}{\prod_{n=1}^\infty(1-q^{ln})^{l-1}(1-q^{2ln})}\eeq
which equals \beq
\frac{q^{-\frac{l^2}8}[\kappa_{q^\frac12}(l,1)\kappa_{q^\frac12}(l,3),\cdots,\kappa_{q^\frac12}(l,l-1)]^2}{\prod_{n=1}^\infty(1-q^{ln})^{l-1}(1-q^{2ln})}
&=&q^{-\frac{l^2}{8}}\prod_{n=1}^\infty\frac{(1+q^{n-\frac12})^2}{(1-q^{ln})}
\eeq for even $l$, and equals \beq
\frac{q^{-\frac{l^2-1}8}[\kappa_{q^\frac12}(l,1)\kappa_{q^\frac12}(l,3),\cdots,\kappa_{q^\frac12}(l,l-2)]^2\kappa_{q^\frac12}(l,l)}{\prod_{n=1}^\infty(1-q^{ln})^{l-1}(1-q^{2ln})}
&=&2q^{-\frac{l^2-1}8}\prod_{n=1}^\infty\frac{(1-q^{2ln-l})}{(1-q^{2n-1})^2}
\eeq for odd $l$. Where $\kappa_q$ is defined by Eqs. (4.3) and (4.4).

The R.H.S. is \beq \sum_{s=0}^l
\text{ch}_q\Omega_s \;\text{ch}_q
L(\Lambda_s-\frac{s}{4}\delta)&=&\sum_{s=0}^lq^{\frac{(l-s)s}{2}}\text{ch}_q\Omega_s\;\dim_q
L(\Lambda_s). \eeq Then by the computation of $\Omega_s$ and $\dim_q
L(\Lambda_s)$ before, the proof for our main theorems is finished.

\vskip30pt
\def\refname{\centerline{\bf REFERENCES}}


\begin{thebibliography}{15}

\bibitem[1]{A} G.~E. Andrews, {\em $q$-Series: Their Development and Application in Analysis, Number Theory,
Combinatorics, Physics and Computer Algebra}, CBMS Reg. Conf. Ser.
Math. vol. 66, Amer. Math. Soc., Providence, RI, 1986.

\bibitem[2]{Br} D.~M. Bressoud, {\em Analytic and combinational generalizations
 of the Rogers-Ramanujan identities}, Mem. Amer. Math. Soc., {Vol. 24, No. 227 (1980)}.

\bibitem[3]{CS} L. Carlitz; M.~V. Subbarao, {\sl{A simple proof of the quintuple product identity}}, Proc. Amer. Math. Soc. 32 (1978), 42--44.

\bibitem[4]{FLM}I.~B. Frenkel; J. Lepowsky; A. Meurman, {\sl{``Vertex Operator Algerbas and
the Monster"}}, Academic. Press, Boston, 1989.

\bibitem[5]{G}Y. Gao, {\sl{Vertex Operators arising from the homogenous realization for $\widehat{gl}_N$}}, Comm. Math. Phys. 159 (1994), 1--13.

\bibitem[6]{JX} N. Jing; L. Xia, {\sl{Representations of affine Lie algebras and product-sum identities}}, J. Algebra, 314 (2), (2007), 538--552.

\bibitem[7]{K}V.~G. Kac, {\sl ``Infinite-dimensional  Lie Algebras"}, 3rd ed., Cambridge Univ. Press,
Cambridge, U.K., 1990.

\bibitem[8]{LW1} J. Lepowsky; R. Wilson, {\sl The structure of standard modules, I: universal algebras
and the Rogers-Ramanujan identities}, Invent. Math. {77
(1984)}, 199--290.

\bibitem[9]{LW2} J. Lepowsky; R. Wilson, {\sl The structure of standard modules, II: the case $A_1^{(1)}$, principal gradation}, Invent. Math. {79 (1985)},
417--442.

\bibitem[10]{LH}D. Liu; N. Hu, {\sl{Vertex representations of the toroidal Lie algebra of type $G_2^{(1)}$}}, J. Pure and Appl. Algebra, 198 (2005), 257--279.

\bibitem[11]{Mi} K. C. Misra, {\sl Realization of level one standard
$\tilde{C}_{2k+1}$-modules}, Trans. Amer. Math. Soc., {321 (2)
(1990),} 483--504.

\bibitem[12]{S} I. Schur, {\sl Zur additiven Zahlentheorie}, S.~B. Preuss. Akad. Wiss. Phys. Math. Kl., 1926, 488--495.


\bibitem[13]{XH}L. Xia; N. Hu,  {\sl{Irreducible representations for Virasoro-toroidal Lie algebras}},
J. Pure and Appl. Algebra {194 (2004)}, 213--237.

\end{thebibliography}
\end{document}